\synctex=1
\documentclass[11pt]{amsart}
\usepackage{cite}
\usepackage{pinlabel}
\usepackage{amsmath} 
\usepackage{amsthm} 

\usepackage{graphicx} 
\setkeys{Gin}{keepaspectratio}
\usepackage{hyperref}
\usepackage{url}
\usepackage{bm}
\usepackage{mathabx}
\usepackage[left=1.25in,top=1in,right=1.25in,bottom=1in,head=.1in]{geometry}
\usepackage{xcolor}
\usepackage{tikz}
\usetikzlibrary{cd}
\usepackage{adjustbox}
\usepackage[normalem]{ulem} % 
\usepackage{enumitem}

\usepackage{verbatim}
\usepackage{xspace}
\newcommand{\items}{\begin{itemize}[leftmargin=25pt,rightmargin=15pt]
  \setlength\itemsep{2pt}}
\newcommand{\stopitems}{\end{itemize}}

\setlength{\marginparwidth}{0.8in}%for todonotes
\usepackage[textsize=scriptsize]{todonotes}

\usepackage{leftidx}% http://ctan.org/pkg/leftidx
\usepackage{fancyhdr}
\pagestyle{fancy}
\fancyhf{}
\fancyhead[CO]{\small\textsc{Exotic homotopy classes of diffeomorphisms}}
\fancyhead[CE]{\small\textsc{Auckly and Ruberman}}
\cfoot{\ \vskip.01in $_{\thepage}$}

% {\vskip 5pt\noindent{\bf #1}\vskip 3pt}

% theorems, lemmas, remarks, etc.
 % lettered theorems (A,B,C,D)

\newtheorem{theorem}{Theorem}[section] % numbered theorems, lemmas, etc
\newtheorem*{theorem*}{Theorem}
\newtheorem{lemma}[theorem]{Lemma}

\newtheorem*{conjecture*}{Conjecture}
\newtheorem{question}[theorem]{Question}
\newtheorem*{question*}{Question}
\newtheorem*{lemma*}{Lemma}
 % numbered conjecture
 % numbered question
%\renewcommand{\theconj}{\Alph{conj}}
\newtheorem{proposition}[theorem]{Proposition}

\newtheorem*{corollary*}{Corollary}
\theoremstyle{definition}
\newtheorem{definition}[theorem]{Definition}
\newtheorem{remark}[theorem]{Remark}

\newtheorem*{remark*}{Remark}

\newtheorem{example}[theorem]{Example}

\newtheorem*{example*}{Example}
\newtheorem*{remarks*}{Remarks}
\newtheorem*{addenda*}{Addenda}
\newtheorem*{construction*}{Construction}

 %% for (g,\eta)  %% Dave prefers \Delta

%\newcommand{\cc}{\check{C}}
%\newcommand{\lc}{\check{L}}
\newcommand{\cc}{{\bf c}}  %%% replace with line below if we prefer eta
\newcommand{\nnu}{\boldsymbol{\nu}}  %%% replace with line below if we prefer plain \nu

\newcommand{\bM}{{\bf M}}

\newcommand{\bX}{{\bf X}}
\newcommand{\bY}{{\bf Y}}
\newcommand{\bZ}{{\bf Z}}

\newcommand{\bff}{\boldsymbol{f}}

% familiar spaces

\newcommand{\sss}{S^2\mkern-1.5mu \times \mkern-1.25mu S^2} % \ss is German double-s

 % \ss is German double-s

%%%% specific manifolds %%%%%
\newcommand{\mm}{\bM} % notation for specific manifold M = CP^ \# 2 \cptwobar
 % notation for specific manifold V = W_4,4
 % notation for specific manifold W = cork
\newcommand{\xx}{\bX} % notation for specific manifold X with D \neq 0
\newcommand{\yy}{\bY} % notation for specific manifold Y with D = 0
\newcommand{\zz}{\bZ} % notation for specific manifolds Z whose higher D's we calculate

 % for NonSpin
 % nonspin version
 % spin version
 % manifolds with families of surfaces
 % auxiliary manifold for families of diffeomorphisms

  %%%%%% h = (g,\eta) a perturbation.
\DeclareMathOperator{\sub}{Sub}

\DeclareMathOperator{\diff}{Diff}

 % Torelli group

\DeclareMathOperator{\emb}{emb}

\DeclareMathOperator{\homeo}{{Homeo}^0}
\DeclareMathOperator{\Top}{Top}  %%% or if we prefer change to C^0 superscript
 
 %%%% for tangent maps
%\newcommand{\tT}{T}

%\newcommand{\bztilde}{\widetilde{\bZ}}
%\newcommand{\bztilde}{\Red{\widetilde{\bZ}}}

%\newcommand{\id}{1\!\!1}

%%%%%%%%%  math blackboard bold  %%%%%%%%%%

\newcommand{\R}{\mathbb R}
\newcommand{\Z}{\mathbb Z}

\newcommand{\bc}{\mathbb C}

%%%%%%%%%  math cal  %%%%%%%%%%

%%%%%%%%%  math frak  %%%%%%%%%%

% tildes n' hats 
% \newcommand\Shat{\hat{C}}
\newcommand\Chat{\widehat{C}}
%\newcommand\Shat{\widehat{\mathfrak{S}}}

%%%%%%%%%  math symbols  %%%%%%%%%
 % for maps f \co X \to Y

%\renewcommand{\phi}{\varphi}

%%%%%%%%%  operator symbols  %%%%%%%%

%\DeclareMathOperator{\g}{Gluck}
%\DeclareMathOperator{\emb}{Emb}
%\DeclareMathOperator{\diff}{Diff}
%\DeclareMathOperator{\diffp}{Diff_$p$}

\DeclareMathOperator{\SO}{SO}

\DeclareMathOperator{\U}{U}

\newcommand{\bone}{{\bf 1}}

%%%%%%%%%  special commands  %%%%%%%%

%\newcommand{\sss}{S^2\!\times\!S^2}

%\newcommand{\ssss}{S^1\!\times\!S^3}

\newcommand{\cs}{\mathbin{\#}}
\newcommand{\cpone}{\bc P^1}

\newcommand{\cptwo}{\bc P^2}

\newcommand{\cptwobar}{\smash{\overline{\bc P}^2}}

\newcommand{\spinc}{\operatorname{spin^c}}

%\newcommand{\smargin}[1]{\relax}
%%%% for colored text that you might want to omit
 %% to color some text red
\newcommand{\unred}[1]{ \ignorespaces}  %% to omit that red text
%%%%%%%%%%%%%%%%%%%%%%%% to remove all colors
%\renewcommand{\color}[1]{\relax}
%%%%%%%%%%%%%%%%%%%%%%%%
\newcommand{\Purple}[1]{{\color{purple}#1}}
\newcommand{\Blue}[1]{{\color{blue}#1}}
\newcommand{\Red}[1]{{\color{red}#1}}
%\footnotesize O}}
%\small 1}}

\newcommand{\sw}{\operatorname{SW}}

 %matching Konno's notation
%\newcommand{\fsw}{\operatorname{\mathbb{FSW}}}  % Possibly use?
%\newcommand{\swc}{\operatorname{SWC}}

%\newcommand{\xx}[2]{#1\,\cptwo\cs #2\,\cptwobar}

%\newcommand{\B}{B^{\scriptscriptstyle +}\!}
%\newcommand{\D}{D^{\scriptscriptstyle +}\!}
% footnotes %

\newcommand{\Bigwedge}{\mathord{\adjustbox{valign=B,totalheight=.6\baselineskip}{$\bigwedge$}}}

\begin{document}

% \title[short text for running head]{full title}
\title{Exotic families of embeddings}
% \\ \today}

%    Only \author and \address are required; other information is
%    optional.  Remove any unused author tags.

%    author one information
% \author[short version for running head]{name for top of paper}
\author{}
\address{}
\curraddr{}
\email{}
\thanks{}

%    author two information
\title{Exotic families of embeddings}
\author[Dave Auckly]{Dave Auckly}
\address{Department of Mathematics\newline\indent Kansas State University\newline\indent  Manhattan,
Kansas 66506}
\email{dav@math.ksu.edu}
\author[Daniel Ruberman]{Daniel Ruberman}
\address{Department of Mathematics, MS 050\newline\indent Brandeis
University \newline\indent Waltham, MA 02454}
\email{ruberman@brandeis.edu}
\thanks{Both authors were partially supported by NSF Grant DMS-1928930 while they were in residence at the Simons Laufer Mathematical Sciences Institute (formerly known as MSRI). We thank the institute for its hospitality. Further progress was made at our AIM SQuaRE, \emph{Family gauge theory and embedding spaces}, and we thank AIM for the congenial working environment. The first author was partially supported by Simons Foundation grant 585139 and NSF grant DMS-1952755. The second author was partially supported by NSF Grant DMS-1811111 and NSF grant DMS-1952790. \\
Math.~Subj.~Class.~2010: 57M25 (primary), 57Q60 (secondary).}
%\thanks{}

\subjclass[2010]{57M25 (primary), 57Q60 (secondary)}
%    The 2010 edition of the Mathematics Subject Classification is
%    now available.  If you are citing a classification from the
%    new scheme, use the following input coding instead.
%\subjclass[2010]{Primary }

%\date{\today}

\begin{abstract} We construct a number of topologically trivial but smoothly non-trivial families of embeddings of $3$-manifolds in $4$-manifolds. These include embeddings of homology spheres in $S^4$ that are not isotopic but have diffeomorphic complements, and families (parameterized by high-dimensional spheres) of embeddings of any $3$-manifold that embeds in a blown-up $K3$ surface. In each case, the families are constructed so as to be topologically trivial in an appropriate sense. We also illustrate a general technique for converting a non-trivial family of embeddings into a non-trivial family of submanifolds. 
\end{abstract}
\dedicatory{Dedicated to Tom Mrowka on the occasion of his $61^{st}$ birthday}
\maketitle

%    Text of article.

\section{Introduction}
Starting
with the work of Donaldson~\cite{donaldson:polynomial} and Freedman~\cite{freedman:simply-connected}, many simply connected $4$-manifolds have been shown to admit exotic smooth structures. It is natural to look for other exotic phenomena related to $4$-manifolds, including exotic embeddings of surfaces, exotic embeddings of $3$-manifolds, exotic diffeomorphisms, and exotic families of such embeddings and diffeomorphisms.  
In this paper, we establish two theorems about exotic embeddings of $3$-manifolds in $4$-manifolds. The first gives infinitely many exotic embeddings of certain homology $3$-spheres in the $4$-sphere. The second gives exotic families, parameterized by higher dimensional spheres, of many $3$-manifolds in larger $4$-manifolds. 

\begin{theorem}\label{T:A}
There exist an infinite collection of integer homology $3$-spheres, $Y_n$, each with an infinite number of smooth embeddings
$j_{p,n}\colon Y_n \to S^4$ so that
\begin{enumerate}
\item $S^4\setminus j_{p,n}(Y_n) \cong \text{\rm int}(X_+(n)) \perp\!\!\perp \text{\rm int}(X_-(n))$,
\item $X_+(n) \cong - X_-(n)$,
\item $j_{p,n}$ is topologically ambient isotopic to $j_{q,n}$,
\item If $\psi\colon S^4\to S^4$ is a diffeomorphism satisfying $\psi\circ j_{p,n} = j_{q,n}$, then $p = q$.
\end{enumerate}
\end{theorem}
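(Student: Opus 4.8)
The plan is to realize each $Y_n$ as a separating hypersurface in $S^4$ built from a single homology ball, and to produce the family $\{j_{p,n}\}$ by regluing along an infinite-order \emph{cork twist}. Concretely, I would look for a homology ball $X_+(n)$ with $\partial X_+(n)=Y_n$ together with an orientation-reversing identification exhibiting $S^4 = X_+(n)\cup_{Y_n}(-X_+(n))$, so that conditions (1) and (2) hold by construction with $X_-(n)=-X_+(n)$. The embeddings $j_{p,n}$ are then defined by precomposing the gluing with $\tau^p$, where $\tau\colon Y_n\to Y_n$ is a self-diffeomorphism that extends to a homeomorphism of $X_+(n)$ but whose smooth extensions are obstructed. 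One must first check that each regluing $X_+(n)\cup_{\tau^p}(-X_+(n))$ is \emph{smoothly} standard $S^4$; this is typically arranged by taking $\tau$ supported in a cork sitting inside a standard $S^4$, so that the twist is absorbed after crossing the cork's contractible complement.

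For the topological statement (3), I would use that the obstruction to extending $\tau$ topologically vanishes, via Freedman's theory applied to the contractible/homology-ball pieces, so that $\tau$ extends to a self-homeomorphism $H$ of $X_+(n)$. Gluing $H$ on $X_+(n)$ to the identity on $-X_+(n)$ produces a self-homeomorphism of $S^4$ carrying $j_{0,n}$ to $j_{p,n}$, and composing these gives $j_{p,n}\simeq j_{q,n}$ topologically; promoting this to a topological \emph{ambient} isotopy uses the homotopy type of $\home(S^4)$ together with topological isotopy extension. The content here is essentially Freedman-theoretic, and I expect it to be routine once $\tau$ is chosen to bound topologically.

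The crux is the smooth rigidity (4), and this is where the main obstacle lies. If $\psi\in\diff(S^4)$ satisfies $\psi\circ j_{p,n}=j_{q,n}$, then $\psi$ carries the complementary pieces of $j_{p,n}(Y_n)$ to those of $j_{q,n}(Y_n)$; after accounting for the symmetry $X_-(n)\cong -X_+(n)$, this yields a diffeomorphism of $X_+(n)$ inducing $\tau^{\,q-p}$ (up to smooth isotopy) on $Y_n$. Thus (4) is equivalent to showing that $\tau^{\,k}$ extends smoothly over $X_+(n)$ only when $k=0$. To detect this on a homology ball, where the absolute \SW invariants are vacuous, I would use a reimbedding technique: cap $X_+(n)$ off inside a closed $4$-manifold $Z_n$ with $b^+>1$ and a nontrivial \SW invariant along $Y_n$, and study how cutting along $Y_n$ and reinserting the $\tau^k$-twisted copy changes the families \SW (or Bauer--Furuta) invariant of $Z_n$. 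The obstruction to extending $\tau^k$ is then read off from a relative invariant associated to the mapping cylinder of $\tau$, which I expect to behave additively under composition, so that its value on $\tau^k$ is a fixed nonzero multiple of $k$.

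The hardest step, and the one I would spend the most effort on, is producing a single $\tau$ whose relative invariant is nonzero and all of whose iterates are distinguished, i.e. verifying that the detecting invariant genuinely grows with $k$ rather than wrapping around or vanishing. I expect to arrange this by drawing $Y_n$ from a family (such as boundaries of plumbings or surgeries on knots) for which the capped manifold $Z_n$ has \SW basic classes governed by a polynomial forcing infinitely many distinct values, and by checking that $\tau$ acts with infinite order on the relevant moduli/homology data. Care is needed to ensure the invariant is a genuine invariant of the embedding under all of $\diff(S^4)$, not merely under diffeomorphisms preserving the two pieces setwise, which is why the symmetry in (2) and a careful bookkeeping of the two ways $\psi$ can match the complementary pieces must be built into the definition of the invariant.
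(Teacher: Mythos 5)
Your overall architecture matches the paper's: realize $S^4$ as the twisted double $X_+(n)\cup_{\tau^p}(-X_+(n))$ of an infinite-order cork (the paper uses Gompf's corks $C(2,1;-2n)$, with Tange's theorem supplying the diffeomorphisms of all the twisted doubles with $S^4$), get (1) and (2) by construction, get (3) from Freedman, and reduce (4) to non-extendability of powers of the twist. One structural difference: the paper does not need your proposed ``additive relative invariant of the mapping cylinder.'' It instead quotes Gompf's theorem that the manifolds $\left(E(2)\cs 2\cptwobar\setminus \interior(X_+(n))\right)\cup_{\tau^p}X_+(n)$ are pairwise non-diffeomorphic, and reduces (4) to that statement by regluing the relevant isotopies; the speculative last paragraph of your proposal can therefore be replaced by a citation.

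The genuine gap is in your reduction of (4). From $\psi\circ j_{p,n}=j_{q,n}$ you obtain diffeomorphisms $\widehat{\alpha},\widecheck{\alpha}$ of $X_+(n)$ (one for each complementary piece) whose boundary restrictions satisfy $\tau^q=\widecheck{\alpha}\,\tau^p\,\widehat{\alpha}^{-1}$ on $Y_n$; this is \emph{not} yet the statement that $\tau^{q-p}$ extends over $X_+(n)$. To get there you must control how arbitrary elements of $\pi_0(\diff(Y_n))$ conjugate $\tau$: if some mapping class conjugated $\tau$ to, say, $\tau^2$, or if $Y_n$ admitted an orientation-reversing diffeomorphism (which would allow $\psi$ to swap the two complementary pieces), the conclusion $p=q$ would fail. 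The paper spends the bulk of its effort precisely here: it computes the Casson invariant $\lambda(Y_n)=-8n$ to rule out orientation-reversing diffeomorphisms, and uses the JSJ decomposition of $Y_n$ (a splice involving two stevedore exteriors and a Seifert piece), the chirality of the stevedore knot, and the known mapping class group of its exterior to produce a homomorphism $\delta\colon\pi_0(\diff(Y_n))\to\{\pm1\}$ with $\alpha\tau\alpha^{-1}=\tau^{\delta(\alpha)}$, whence $p=\pm q$, handled by restricting to $p>0$. Your phrase about ``careful bookkeeping of the two ways $\psi$ can match the complementary pieces'' gestures at part of this, but the mechanism --- a computation of $\pi_0(\diff(Y_n))$ and of its conjugation action on $\tau$ --- is absent, and without it step (4) does not close.
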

\begin{remark}
A recent MathOverflow~\cite{klug:MO} posting by Michael Klug asked if there were non-isotopic embeddings of $3$-manifolds into $S^4$ with diffeomorphic complements. A number of people noticed that the existence of corks with doubles and twisted doubles diffeomorphic to $S^4$ allows one to conclude that the answer to the question is yes. Ian Agol posted this as a reply on MathOverflow. The above theorem is our version, based on the infinite-order corks discovered by Gompf~\cite{gompf:infinite-cork} to give infinitely many non-isotopic embeddings.
\end{remark}

We then turn to the construction and detection of exotic families of embeddings. Here we illustrate a general principle by establishing the following result about smoothly knotted families of codimension one embeddings. Our convention is to denote by $\emb(Y,Z)$ the smooth embeddings of $Y$ in $Z$, and by $\emb^{\Top}(Y,Z)$ the space of locally flat topological embeddings.  Note that these are spaces of \emph{maps}, rather than submanifolds; the space $\sub(Y,Z)$ of submanifolds is the quotient of $\emb(Y,Z)$ by the diffeomorphism group of $Y$ (with similar notation for locally flat topological submanifolds.) With some additional hypotheses, the same results hold for families of submanifolds.  We address the case of submanifolds in the final section of the paper.

To state the main theorem about families of embeddings, we use the notation $E(2)$ for the $K3$ surface, and let $T$ be a smooth fiber in a particular elliptic fibration. 
\begin{theorem}\label{T:B}
Let $Y$ be a $3$-manifold that embeds smoothly in $E(2) \cs \cptwobar$ disjointly from $T$. For any $r>0$, {$k\geq 0$} 
there are smooth simply connected $4$-manifolds $Z^k_r$, together with  spherical classes $J_p^{j+1}\in H_{j+1}(\emb(Y,Z^k_r))$
for $j\le k$ and $j \equiv k, \ (\text{mod} \ 2)$ and $p = 1,\cdots,r$ that generate a rank $r$ summand of
\[
\ker\left[H_{j+1}(\emb(Y,Z^k_r))\to H_{j+1}(\emb^{\Top}(Y,Z^k_r))\right].
\]
Furthermore, when $Y$ is a homology sphere, the classes $J_p^{j+1}$ are 
trivial in $\pi_{j+1}(\emb^{\Top}(Y,Z^k_r))$, and trivial in 
$\pi_{j+1}(\emb(Y,Z^k_r\cs\sss))$.
\end{theorem}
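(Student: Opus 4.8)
The plan is to reduce the construction and detection of exotic families of embeddings to the corresponding statements for families of \emph{diffeomorphisms}, which are governed by family \SW invariants. First I would set up the restriction (evaluation) fibration
\[
\diff(Z^k_r, Y) \lto \diff(Z^k_r) \xrightarrow{\ \text{ev}\ } \emb(Y, Z^k_r), \qquad \vphi \mapsto \vphi \circ j_0,
\]
where $j_0$ is a fixed reference embedding and $\diff(Z^k_r, Y)$ is the subgroup restricting to $j_0$; the isotopy extension theorem shows that $\text{ev}$ is a fibration. A based family $\Phi_p \colon S^{j+1} \to \diff(Z^k_r)$ then produces an embedding family $\text{ev} \circ \Phi_p \colon S^{j+1} \to \emb(Y, Z^k_r)$, whose Hurewicz image is the desired spherical class $J_p^{j+1}$.

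The manifolds $Z^k_r$ and the diffeomorphism families $\Phi_p$ I would take from the companion construction of exotic homotopy classes of diffeomorphisms: starting from $E(2)\cs\cptwobar$ one performs $r$ independent cork/log-transform modifications localized near disjoint copies of the fiber $T$, producing $r$ classes $[\Phi_p] \in \pi_{j+1}(\diff(Z^k_r))$ detected by pairwise orthogonal family \SW invariants, with the dimension bookkeeping of the family moduli spaces forcing $j \le k$ and $j \equiv k\ (\mathrm{mod}\ 2)$. Because every modification is supported away from $T$, and $Y$ embeds disjointly from $T$, the reference embedding $j_0 \colon Y \hookrightarrow E(2)\cs\cptwobar \subset Z^k_r$ survives into each $Z^k_r$.

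For detection I would pass from the embedding family back to a family of closed $4$-manifolds: cutting $Z^k_r$ along the varying image $j_s(Y)$ and reassembling the two sides using the clutching data of $\Phi_p$ yields a bundle $\widehat{Z} \to S^{j+1}$ whose family \SW (family Bauer--Furuta) invariant equals, up to a gluing/multiplicativity factor supported on the $\spinc$ structure near $T$, the invariant already computed for $\Phi_p$. Non-vanishing then shows $J_p^{j+1} \ne 0$, and orthogonality of the $r$ invariants shows the $J_p^{j+1}$ span a rank-$r$ summand. The main obstacle is exactly here: one must verify that the evaluation map does not annihilate $[\Phi_p]$ — equivalently that the detecting invariant factors through $\emb(Y, Z^k_r)$ and not through the fiber $\diff(Z^k_r, Y)$ — and that the rank-$r$ independence is preserved under the transfer. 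This is where the placement of $Y$ relative to the $\spinc$ structures carrying the invariant is essential, and where I expect the genuinely new work (beyond the diffeomorphism case) to lie.

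Finally, the two triviality statements. Topological triviality holds because each $\Phi_p$ acts trivially on the homology of the simply connected $Z^k_r$ and is therefore topologically isotopic to the identity by Freedman--Quinn; hence $\text{ev}\circ\Phi_p$ is null-homotopic in $\emb^{\Top}(Y, Z^k_r)$, placing $J_p^{j+1}$ in the stated kernel, and when $Y$ is a homology sphere the vanishing of the relevant topological obstruction groups upgrades this to triviality in $\pi_{j+1}(\emb^{\Top}(Y, Z^k_r))$. Smooth triviality after stabilization follows from the family version of Wall's theorem: connect-summing with $\sss$ makes the $\Phi_p$ smoothly isotopic to the identity — equivalently, the family \SW invariants vanish after stabilization — so the embedding families become trivial in $\pi_{j+1}(\emb(Y, Z^k_r \cs \sss))$.
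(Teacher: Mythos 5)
Your proposal takes a genuinely different route from the paper, and the difference is exactly where the gap sits. You propose to produce the classes $J_p^{j+1}$ as images $\mathrm{ev}_*[\Phi_p]$ of $r$ independent nontrivial families of diffeomorphisms under the evaluation fibration $\diff(Z^k_r,Y)\to\diff(Z^k_r)\to\emb(Y,Z^k_r)$. The paper does essentially the opposite: it uses \emph{one} family of diffeomorphisms per dimension --- a family $\alpha^k$ that is smoothly \emph{null-homotopic} after the stabilizing summand $U$ is present --- and gets the rank $r$ from $r$ distinct exotic $4$-manifolds $X_p$ spliced in through a buffer $V=\overline{V_0}\cup_Y\overline{V_1}$ via stable diffeomorphisms $\varphi_p$. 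Concretely, the clutched bundle $\mathcal{Z}[\alpha^k\cs_{\Sigma_1}1]$ is trivialized using the null-homotopy $F$, and the constant family at the exotic embedding $j_p=\varphi_p(\psi_p\cs 1)^{-1}\iota$ is pushed through that trivialization. In terms of your fibration this lives at the connecting map $\pi_{j+1}(\emb(Y,Z))\to\pi_j(\diff(Z,Y))$ rather than at $\mathrm{ev}_*$. The paper also uses family \emph{Donaldson} invariants (Konno's classes) for this theorem, explicitly deferring the Seiberg--Witten version to a later paper; and the needed topological isotopies come from the key stable isotopy of Auckly--Kim--Melvin--Ruberman rel $N(\Sigma_1)$, much as you suggest via Kreck/Perron/Quinn.

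The genuine gap is in your detection step, and it is not a technicality you can defer: as written, your invariant does not descend to $\pi_{j+1}(\emb(Y,Z^k_r))$. ``Cutting along $j_s(Y)$ and reassembling the two sides using the clutching data of $\Phi_p$'' reproduces the bundle $\mathcal{Z}[\Phi_p]$, whose family invariant detects $[\Phi_p]\in\pi_{j+1}(\diff(Z^k_r))$ but depends on the lift $\Phi_p$, not only on the embedding family $\mathrm{ev}\circ\Phi_p$; it therefore cannot show that $\mathrm{ev}_*$ is injective on your classes, i.e.\ that $[\Phi_p]$ does not come from $\diff(Z^k_r,Y)$. An honest invariant of the embedding family must be built from the family of embeddings alone. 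The paper's solution is the family submanifold sum with a \emph{fixed} standard piece $S^{j+1}\times(-V)$: one keeps the preferred component $\mathcal{Z}[\![J_p^{j+1}]\!]$ (discarding the component carrying $U$) and applies Konno's family Donaldson class $\mathbb{D}_{\cc_q,\beta}$. The geometric mechanism that makes this computable is that $Y$ separates off $\overline{V_1\cs U}$, so replacing it by $\overline{V_1}$ removes the stabilizing summand and identifies $\mathcal{Z}[\![J_p^{j+1}]\!]$ with $\mathcal{Z}[\alpha^k\cs_{\Sigma_1}1_{X_p\cs_{\Sigma_2}V}]$, whose invariant is $\gamma_p'\delta_{pq}$ by the anti-holomorphic blow-up theorem and the fiber-sum formula. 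Without some such mechanism tying the placement of $Y$ to the stabilization, there is no reason the evaluation map should not annihilate your $[\Phi_p]$, so the core of the theorem remains unproved in your outline. (Your two triviality statements are argued in essentially the paper's way and are fine in outline, though the smooth triviality after $\cs\,\sss$ comes from the built-in stable contraction of the diffeomorphism families rather than from a family Wall theorem.)
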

As the reader will see, the constructions that go into the proof are very flexible. Hence the argument would work for many other $3$-manifolds, including those that embed disjoint from a nucleus in an arbitrary blowup of the elliptic surface $E(n)$. The specific choices embodied in Theorem~\ref{T:B} are described in Section~\ref{S:blocks}.

To prove this theorem, we need a method to construct examples, an invariant to distinguish the smooth families, and a method to compute the invariant. Defining and computing invariants builds on the second author's earlier papers~\cite{ruberman:isotopy,ruberman:polyisotopy,ruberman:swpos} on exotic diffeomorphisms. These defined invariants using both Donaldson theory and Seiberg-Witten theory and used them to show the existence of non-isotopic diffeomorphisms and non-isotopic metrics of positive scalar curvature. The story for families of diffeomorphisms and embeddings is much the same. It was easier to see how to define and compute invariants for families using Donaldson theory. These invariants depend upon a $\U(2)$ lift of an $\SO(3)$ bundle with suitable first Pontryagin class, and a homology orientation. There are only a finite number of possibilities of this data. This leads~\cite{auckly-ruberman:diffym} to results demonstrating large rank summands in the kernel for spaces of diffeomorphisms and embeddings of $S^3$.
{Many related works have appeared in the last few years, including~\cite{baraglia:k3,baraglia-konno:nielsen,botvinnik-watanabe:families,budney-gabai:3-balls,iida-konno-mukherjee-taniguchi:boundary,konno:classes,konno-mukherjee-taniguchi:codim1,kronheimer-mrowka:dehntwist,lin:dehn,lin:loops,lin-mukherjee:surfaces, nakamura:diffeomorphisms,nakamura:Z+Z,smirnov:loops,watanabe:S4,watanabe:theta}.}

After this paper was written, we have worked out how to define and compute invariants based on the Seiberg-Witten equations~\cite{auckly-ruberman:diffsw}. Here the data includes the choice of a $\spinc$ structure, of which there are an infinite number of suitable choices. The principle of the definition of the invariants is the same as in~\cite{ruberman:isotopy,konno:classes} but the calculation for higher dimensional families is new. This calculation leads to a strengthening of the result to conclude that the kernels of the maps on homology or homotopy contain infinite rank summands. It will gives similar statements about families of positive scalar curvature metrics. These stronger results derived via the using the Seiberg-Witten equations are not stated here, but will appear in a paper~\cite{auckly-ruberman:diffsw} in preparation. The paper~\cite{auckly-ruberman:diffym} also includes analogous results about families of embedded spheres of self-intersection one. Drouin has extended these results to analogous results about families of spheres of arbitrary self-intersection~\cite{drouin:embeddings}. The first author~\cite{auckly:subtle} has given many new examples of exotically knotted surfaces, even ones that remain exotic after stabilization an arbitrary number of times. Work in progress by the authors along with Konno, Mukherjee, and Taniguchi will further extend results about families of embedded surfaces to all genus and self-intersection numbers. 
\begin{remark}\label{R:cork}
In discussions about our work with various colleagues, it has been suggested that there should be a sort of `cork theorem' for diffeomorphisms and embeddings, possibly even a parameterized version. Recall that the cork theorem~\cite{curtis-freedman-hsiang-stong,matveyev:h-cobordism} says that two homotopy equivalent closed simply connected manifolds are related by removing a contractible piece (the cork) and regluing by an automorphism of the boundary (the cork twist). One might doing this for exotic pairs of embeddings, or showing that exotic pairs of diffeomorphisms also localize on some simple pieces of the underlying manifold. With this in mind, we point out that all of the exotic behavior in this paper and in~\cite{auckly-ruberman:diffym} does stem from a single cork twist, in fact Akbulut's original cork twist from~\cite{akbulut:contractible}. 

In contrast, the work of Konno-Mukherjee-Taniguchi~\cite{konno-mukherjee-taniguchi:codim1} shows that some exotic (up to isotopy) codimension-one embedding behavior persists after an unlimited number of stabilizations. Since any cork twist extends over a bounding manifold after some number of stabilizations, this suggests that there may not be any such a cork theorem for embeddings. 
This seems very intriguing as a topic for future research in this direction.
\end{remark}
\noindent
{\bf Conventions:} We will assume that all $4$-manifolds are smooth, and oriented and that all $3$-manifolds are oriented.  In addition, when considering gauge theoretic invariants of a manifold $X$, we will assume a specific choice of homology orientation, i.e., an orientation of $H^2_+(X;\R)$.  We take homology and cohomology with integer coefficients unless stated otherwise. We will generally assume that $4$-manifolds are simply connected, although many of our results will hold for more general classes of manifolds. A pair of manifolds $X$, $Y$ is \emph{exotic} if they are homeomorphic but not diffeomorphic; a collection $\{X_p\}$ is exotic if each pair $X_i$, $X_j$ is exotic (for $i \neq j$).\\[2ex]
{\bf Acknowledgements:} These results were inspired in part by conversations at MSRI with Hokuto Konno, Anubhav Mukherjee, and Masaki Taniguchi. We thank Chuck Livingston and Jeff Meier for some helpful correspondence about doubly slice knots, and the referee for a careful reading of our original submission. 

\section{Embeddings into \texorpdfstring{$S^4$}{S4w}}

Theorem~\ref{T:A} follows from the existence of chiral infinite order corks with trivial twisted doubles. The existence of these corks
follows in turn from the work of Gompf and Tange 
\cite{gompf:infinite-cork,gompf:infinite-cork2,tange}; see also~\cite{akbulut:infinite-corks} . This is the content of the following proposition.

\begin{proposition}\label{P:cork}
There are oriented, contractible, compact, smooth manifolds $X_+(n)$ together with orientation-preserving self-diffeomorphisms of the boundary, \hfill\newline$f(n)\colon \partial X_+(n) \to \partial X_+(n)$, and with embeddings $i(n)\colon X_+(n) \to E(2)\cs 2\cptwobar$ so that
\begin{enumerate}
\item If \begin{small}$\left(E(2)\cs 2\cptwobar \setminus \text{\rm int}(X_+(n))\right)\cup_{f(n)^p} X_+(n) \cong \left(E(2)\cs 2\cptwobar \setminus \text{\rm int}(X_+(n))\right)\cup_{f(n)^q} X_+(n)$\end{small}, then $p = q$. 
\item There are diffeomorphisms $\Phi_{p,n}\colon X_+(n)\cup_{f(n)^p} -X_+(n) \to S^4$,
\item There is a homomorphism  $\delta\colon  \pi_0(\diff(Y_n))\to\{\pm 1\}$ so that
\[
\alpha f(n)\alpha^{-1} = f(n)^{\delta(\alpha)}.
\]
\end{enumerate}
\end{proposition}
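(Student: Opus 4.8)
The plan is to assemble the three statements from the infinite-order corks of Gompf \cite{gompf:infinite-cork,gompf:infinite-cork2} and their analysis by Tange \cite{tange}, with the underlying contractible piece a relative of Akbulut's cork \cite{akbulut:contractible,akbulut:infinite-corks}. First I would fix, for each $n$, a contractible $X_+(n)$ with an infinite-order boundary diffeomorphism $f(n)$ of $Y_n := \partial X_+(n)$, drawn from this family so that the $Y_n$ are pairwise distinct integer homology spheres. The embedding $i(n)\colon X_+(n) \to E(2)\cs 2\cptwobar$ is the one in which the cork is detected: regluing the complement by $f(n)^p$ produces a family of smooth $4$-manifolds whose Seiberg--Witten (equivalently Donaldson) invariants are pairwise distinct, exactly as in Gompf's detection argument. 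That the blown-up $K3$ surface $E(2)\cs 2\cptwobar$ carries enough nonvanishing basic classes, and that these transform detectably under the cork twist, is what yields conclusion (1): if the two reglued manifolds were diffeomorphic their invariants would agree, forcing $p=q$.

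For conclusion (2) the key point --- and the main obstacle --- is that each twisted double $X_+(n)\cup_{f(n)^p}(-X_+(n))$ is only \emph{a priori} a homotopy $4$-sphere: it is simply connected with the homology of $S^4$, but its standardness is genuine content and cannot follow formally. Indeed $f(n)$ does not extend over $X_+(n)$ (that is precisely the cork property), so there is no gluing-level diffeomorphism identifying the twisted double with the untwisted one. I would therefore produce the diffeomorphisms $\Phi_{p,n}$ by an explicit handle calculus: starting from a symmetric Kirby diagram for $X_+(n)$ in which $f(n)$ is realized as a diagrammatic symmetry, one forms the twisted double and checks that, for every power $p$, the resulting diagram admits a sequence of handle slides and cancellations reducing it to the empty diagram. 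The symmetry of Gompf's construction --- the doubly-slice-type structure realizing $X_+(n)$ as one side of a standard decomposition of $S^4$ --- is what makes this cancellation go through uniformly in $p$, and verifying it for all $p$ simultaneously is the technical heart of this part.

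Finally, for conclusion (3) I would analyze the cork twist inside $\pi_0(\diff(Y_n))$. By construction $f(n)$ is built from a distinguished infinite-order class together with an involutive symmetry of $Y_n$, so its cyclic subgroup $\langle f(n)\rangle$ is preserved, up to inversion, by every self-diffeomorphism: conjugation by $\alpha$ must carry a generator to a generator, hence to $f(n)^{\pm1}$, and the assignment $\alpha \mapsto \delta(\alpha)\in\{\pm1\}$ recording the sign is a homomorphism since the sign of a composite is the product of the signs. The content is to rule out that any conjugate of $f(n)$ is some other power, which I would deduce from the infinite order of $f(n)$ (so the generator is determined up to inversion) together with the naturality of the invariant detecting that order.
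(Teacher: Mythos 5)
Your treatments of conclusions (1) and (2) are consistent in spirit with the paper, which simply quotes these as established results: (1) is \cite[Theorem 3.5]{gompf:infinite-cork2} and (2) is \cite[Theorem 2]{tange}, for the specific corks $X_+(n)=C(2,1;-2n)$. Your sketch of (1) via gauge-theoretic detection and of (2) via handle calculus describes roughly how those cited proofs go, though for (2) you correctly flag that the uniform-in-$p$ cancellation is the technical heart and then leave it unresolved; since the paper outsources this to Tange, that is a difference of exposition rather than a gap.

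Conclusion (3) is where your argument has a genuine gap, and it is the only part of the proposition for which the paper supplies an actual proof. You write that ``its cyclic subgroup $\langle f(n)\rangle$ is preserved, up to inversion, by every self-diffeomorphism: conjugation by $\alpha$ must carry a generator to a generator, hence to $f(n)^{\pm1}$.'' But the assertion that $\alpha\langle f(n)\rangle\alpha^{-1}=\langle f(n)\rangle$ is precisely the statement to be proved; a priori $\alpha f(n)\alpha^{-1}$ could be any element of $\pi_0(\diff(Y_n))$, and nothing in ``$f(n)$ has infinite order'' or ``$f(n)$ is built from a distinguished class'' forces its cyclic subgroup to be normal. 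The paper establishes this by concrete $3$-manifold topology: $f(n)$ is a torus twist along a specific incompressible torus in the JSJ decomposition of $Y_n=S^3_{1/2n}(K)$ (with $K$ the sum of the stevedore knot and its reversed mirror); the Casson invariant computation $\lambda(Y_n)=-8n$ rules out orientation-reversing diffeomorphisms, and the chirality of the stevedore knot rules out diffeomorphisms swapping the two hyperbolic JSJ pieces, so every $\alpha$ preserves that torus up to isotopy; then the explicit description of $\pi_0(\diff(Y_n))$ --- generated by torus twists (which commute with $f(n)$) and by the $\Z_2^2$ mapping class group of the stevedore exterior acting by $(x,y,t)\mapsto(\pm x,\pm y,t)$ near the torus --- shows directly that each generator either commutes with $f(n)$ or conjugates it to $f(n)^{-1}$. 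Without some argument of this kind (or another way to control the full mapping class group of $Y_n$), your construction of $\delta$ does not get off the ground.
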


\begin{proof}[Proof of Proposition~\ref{P:cork}]
We consider a subfamily of the infinite order corks defined by Gompf \cite{gompf:infinite-cork,gompf:infinite-cork2}
set $X_+(n) = C(2,1;-2n)$ for $n>0$. 
Figure~\ref{X1} is a special case of \cite[Figure 3]{gompf:infinite-cork2} with the addition of the result after the zero-framed $2$-handle is canceled with one of the $1$-handles. 
\begin{figure}[ht]
\labellist
\pinlabel {$-2n$} [ ] at 161 190
\pinlabel {$0$} [ ] at 156 135
\pinlabel {$-2n$} [ ] at 161 65
\endlabellist
\centering
\includegraphics[scale = 1]{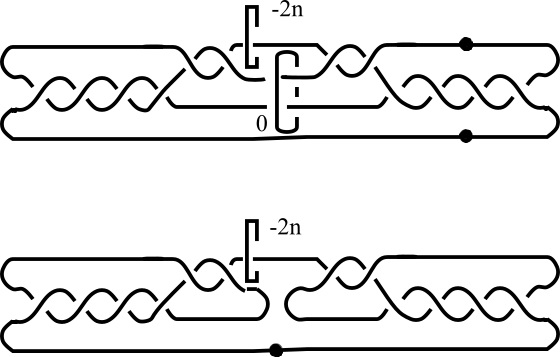}
  \caption{Infinite order corks $X_+(n)$}\label{X1}.
\end{figure}Figure~\ref{Y1} displays the boundary $Y_n$ of $X_+(n)$. It was obtained by a dot-zero swap followed by a Rolfsen twist. We see that this is just $1/2n$ Dehn surgery on a knot $K$, where $K$ is the sum of the stevedore knot and its reversed mirror, or more briefly, $Y_n = S^3_{1/2n}(K)$. This picture also includes a pair of incompressible tori in $Y_n$.
\begin{figure}[ht]
\labellist
\pinlabel {$\frac{1}{2n}$} [ ] at 195 85
\endlabellist
\centering
\includegraphics[scale = 1]{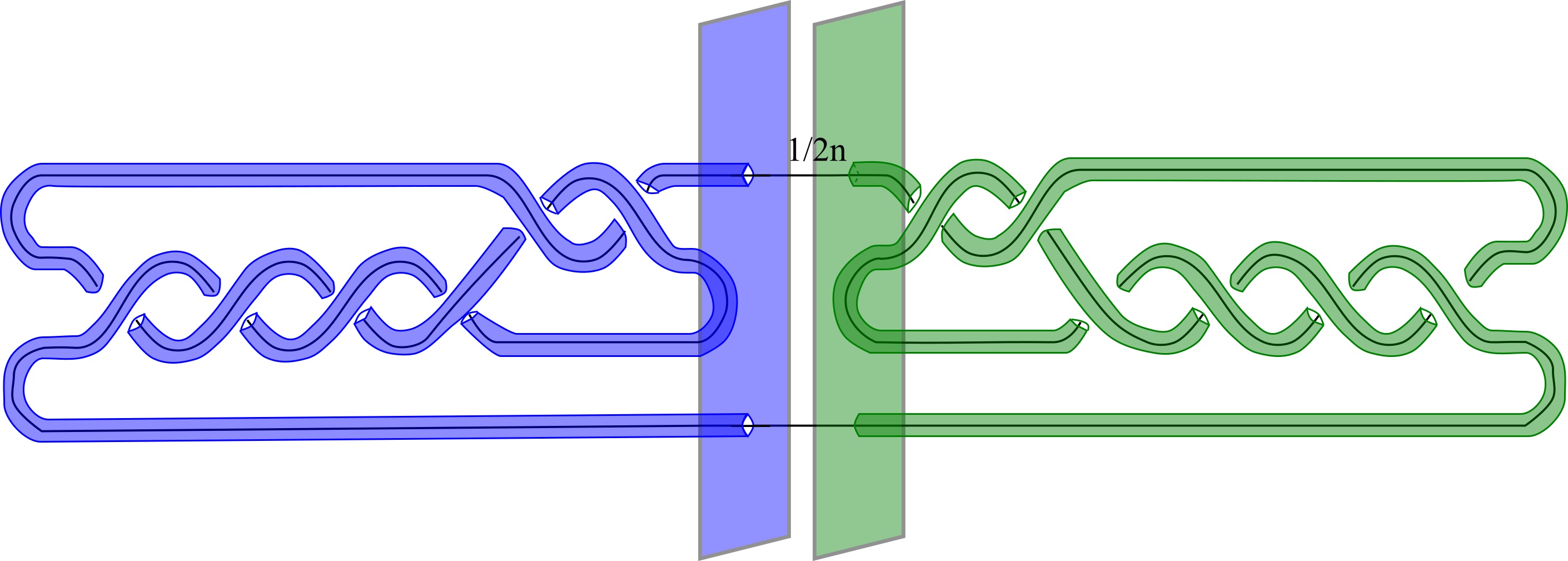}
  \caption{The homology spheres $Y_n$}\label{Y1}.
\end{figure}
The torus on the left of this figure separates the manifold $Y_n$ into two components. The  component on the left is the (mirror) of the exterior of the stevedore knot. Put coordinates on this torus as $(x,y)\in \R^2/\Z^2$ where $\R/\Z \times \text{point}$ is a longitude and $\text{point}\times\R/\Z$ is a meridian. Coordinates for a tubular neighborhood of the torus will then be $(x,y,t)$ and the diffeomorphism $f(n)$ is given by $f(n)(x,y,t) = (x+ t,y,t)$ in the neighborhood and the identity elsewhere. This map is called a torus twist; it is the product of a Dehn twist with the identity on $S^1$.

We can now prove the items in the proposition. The first two are just quoted results.
\begin{enumerate}
\item This is \cite[Theorem 3.5]{gompf:infinite-cork2},
\item This is  \cite[Theorem 2]{tange},
\item This follows quickly from the JSJ decomposition \cite{jaco-shalen:seifert,johannson:book}, as we now describe in more detail.
\end{enumerate}
The manifold $Y_n$ may be described via knot splicing, 
leading to its complete JSJ decomposition as displayed in Figure~\ref{Y1}. In the knot splice construction one identifies the neighborhood of a knot in one manifold with a neighborhood of a knot in a second manifold. One method to see a surgery description of knot splice is to take the product of an interval with the union of the original manifolds. Attaching one end of a $1$-handle to a neighborhood of a point of the knot in the first manifold and the other end to a neighborhood of a point produces a manifold with a boundary component that is obtained by just identifying these neighborhoods. Attaching a $2$-handle to the remainder of the knots and a pair of arcs along the $1$-handle completes the splice. Figure~\ref{Y2} displays $Y_n$ constructed in this way.  See also Budney's paper~\cite{budney:jsj} for other geometric calculations of this sort.

The knot complements arising in the splicing, going left to right in Figure~\ref{Y2}, are: the exterior of the mirror of the stevedore, $T^2\times I$, the Seifert fiber space with base orbifold having underlying surface a cylinder and one singular point of type $(-2n,1)$, another copy of $T^2\times I$, and  the exterior of the stevedore. The copies of $T^2\times I$ are not needed in the JSJ decomposition, but they are included here as a convenient way to describe the gluing via handle calculus. Since the exterior of the stevedore knot admits a complete hyperbolic structure there are no more incompressible tori that are not boundary parallel.
\begin{figure}[ht]
\labellist
\pinlabel {$-2n$} [ ] at 201 63
\pinlabel {$0$} [ ] at 222 38
\endlabellist
\centering
\includegraphics[scale = 1]{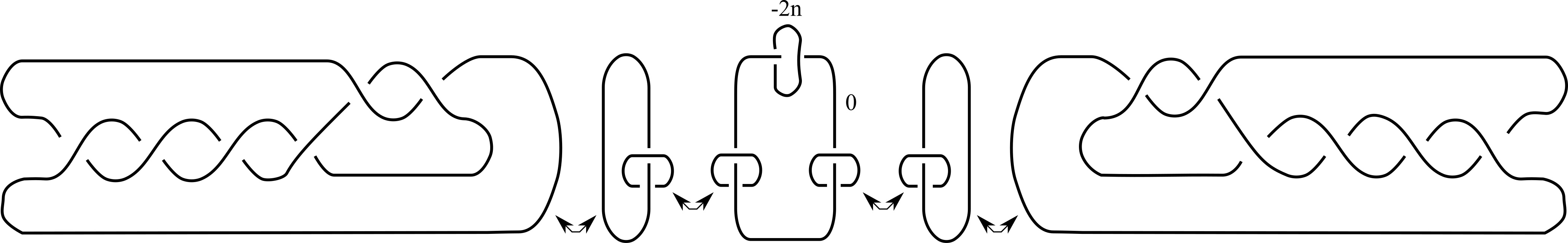}
    \caption{The homology sphere $Y_n$ as a splice}\label{Y2}.
\end{figure}
The Alexander-Conway polynomial of the stevedore knot is $\nabla(z) = 2z^2 - 1$. It follows that the Alexander-Conway polynomial of $K$ is $\nabla_K(z) = (2z^2 - 1)^2$. Clearly, $S^3_{1/0}(K) = S^3$, so the Casson invariant satisfies
$\lambda(S^3_{1/0}(K)) = 0$. Casson's surgery formula~\cite{akbulut-mccarthy} reads:
\[
\lambda(S^3_{1/(m+1)}(K)) - \lambda(S^3_{1/m}(K)) = \frac12 \nabla_K''(0),
\] 
so an induction argument gives $\lambda(Y_n) = \lambda(S^3_{1/2n}(K)) = -8n$. Because Casson's invariant changes sign under orientation reversal~\cite{akbulut-mccarthy}, the manifolds $Y_n$ with $n\neq 0$ do not admit any orientation-reversing diffeomorphisms. Furthermore, the stevedore knot is chiral so there is no diffeomorphism taking the complement of the mirror of the stevedore to the complement of the stevedore. If follows that up to isotopy, any diffeomorphism $\alpha\colon Y_n\to Y_n$ preserves the left torus in the JSJ decomposition as well as the complement of the mirror of the stevedore. 

It is known~\cite{knotinfo} that the mapping class group of the exterior of the stevedore knot is $\Z_2^2$ and up to isotopy any diffeomorphism acts as multiplication by $\pm 1$ in the $\R^2/\Z^2$ model of the boundary torus and as $(x,y,t)\mapsto (\pm x,\pm y,t)$ in a tubular neighborhood. Note that the generator of the $\Z_2^2$ that sends $(x,y,t)\to (x,-y,t)$ commutes with $f(n)$, while the generator that sends $(x,y,t)\to (-x,y,t)$ conjugates $f(n)$ to $f(n)^{-1}$. The other generators of $\diff(Y_n)$ are either twists along one of the tori or diffeomorphisms of the complement of the mirror of the stevedore. It's well-known that twists along a given torus yield an isomorphism $\pi_0(\diff(T^2 \times I,T^2 \times \partial I)) \cong \Z \oplus \Z$.  The main point is to isotope a given diffeomorphism of $T^2 \times I$ to make it level preserving; a detailed argument is written out in~\cite{myers:mcg}. Moreover, diffeomorphisms with disjoint support commute. 

It follows that for any $\alpha\in \pi_0(\diff(Y_n))$, either $\alpha$ commutes with $f(n)$ or conjugates it to its inverse. This distinction is encoded in the homomorphism $\delta$.
\end{proof}

We can now prove the first theorem.
\begin{proof}[Proof of Theorem~\ref{T:A}]
Recall that the goal is to find homology $3$-spheres $Y_n$ with topologically isotopic embeddings $j_p:Y_n \to S^4$ where the complements of $j_p(Y_n)$ are all diffeomorphic, but the embeddings are not smoothly isotopic. 
To this end, let $X_+(n)$ be the manifolds in Proposition~\ref{P:cork}, and set $Y_n=\partial X_+(n)$.  By the proposition, we may write $S^4$ as 
\[
X_+(n)\cup_{f(n)^p} -X_+(n) = X_+(n)\times\{\pm 1\}/\sim, \ \text{where}\  (y,1) \sim (f(n)^p(y),-1),
\]
oriented so that the orientation on $X_+(n)\times\{1\}$ agrees with the orientation on $X_+(n)$. 
Define $i_{p,n}\colon Y_n\to X_+(n)\cup_{f(n)^p} -X_+(n)$ by $i_{p,n}(y) = [y,1]$, and $j_{p,n} = \Phi_{p,n}\circ i_{p,n}$, where $\Phi_{p,n}$ is the diffeomorphism specified in Proposition~\ref{P:cork}. By construction, $S^4\setminus j_{p,n}(Y_n) \cong \text{int}(X_+(n)) \perp\!\!\perp \text{int}(X_-(n))$ and $X_-(n) \cong - X_+(n)$. The maps $j_{p,n}$ are defined for all $p$, but we will restrict to $p>0$ to get the collection required by the theorem.

Let
$\psi\colon S^4\to S^4$ be a diffeomorphism satisfying $\psi\circ j_{p,n} = j_{q,n}$. First assume that $\psi$ is orientation-preserving. Set $\alpha = \Phi_{q,n}^{-1}\circ\psi\circ\Phi_{p,n}$. As $\psi\circ j_{p,n} = j_{q,n}$  we see that $\alpha$ must take $X_+(n)\times \{1\}$ to $X_+(n)\times \{1\}$ or $X_+(n)\times \{-1\}$, and as $\alpha$ is orientation preserving, it must go to $X_+(n)\times \{1\}$. Let $\widehat{\alpha}$ be the diffeomorphism so that $\alpha([x,1]) = [\widehat{\alpha}(x),1]$. Similarly, let $\widecheck{\alpha}$ be the diffeomorphism so that $\alpha([x,-1]) = [\widecheck{\alpha}(x),-1]$.
For $y\in Y_n$ we have
\begin{align*}
[\widehat{\alpha}(y),1] &= \alpha([y,1]) \\
&= \alpha([f(n)^py,-1]) \\
&= [\widecheck{\alpha}f(n)^py,-1] \\
&= [f(n)^{-q}\widecheck{\alpha}f(n)^py,1] 
\end{align*}
and conclude that $\widehat{\alpha}(y) = f(n)^{-q}\widecheck{\alpha}f(n)^py$, so that $f(n)^q = \widecheck{\alpha}f(n)^p\widehat{\alpha}^{-1}$ on $Y_n$. 

Either one of $\delta(\widehat{\alpha})$ or $\delta(\widecheck{\alpha})$ is equal to $+1$, or both are $-1$.   In the former case there is an isotopy $H\colon I\times Y_n \to I\times Y_n$ so that $H_0(y) = f(n)^{q-p}(y)$ and $H_1(y) = \widecheck{\alpha}\widehat{\alpha}^{-1}(y)$. Now  identify $X_+(n)$ with $(I\times Y_n) \cup X_+(n)$ and define
\begin{small}
\[
\xi\colon  \left(E(2)\cs 2\cptwobar \setminus \text{\rm int}(X_+(n))\right)\cup_{f(n)^p} X_+(n) \to \left(E(2)\cs 2\cptwobar \setminus \text{\rm int}(X_+(n))\right)\cup_{f(n)^q} X_+(n),
\]
\end{small}
by
\[
\xi(w) = 
\begin{cases}
w, & w\in E(2)\cs 2\cptwobar \setminus \text{\rm int}(X_+(n))\\
H(t,y), & \text{\rm for} \ w = (t,y) \in I\times Y_n,\\
\widecheck{\alpha}\widehat{\alpha}^{-1}(w), & w\in X_+(n)
\end{cases}.
\]
One checks that this is a well-defined diffeomorphism to conclude that $p = q$ by part (1) of Proposition~\ref{P:cork} .
In the second case there is an isotopy $H\colon I\times Y_n \to I\times Y_n$ so that $H_0(y) = f(n)^{q+p}$ and $H_1(y) = \widecheck{\alpha}\widehat{\alpha}^{-1}(y)$. The argument from the first case then implies that $p = -q$.

If $\psi$ is orientation reversing, pre-compose with 
$\Phi_{p,n}\rho\Phi_{-p,n}^{-1}$ where
\[\rho\colon  X_+(n)\cup_{f(n)^{-p}} -X_+(n) \to X_+(n)\cup_{f(n)^{p}} -X_+(n)\]
is the map given by $\rho([x,\epsilon]) = [x,-\epsilon]$. This produces an orientation preserving diffeomorphism $\psi'\colon S^4\to S^4$ so that  $\psi'\circ j_{-p,n} = j_{q,n}\circ f(n)^{-p}$. 
Indeed,
\begin{align*}
    \psi'\circ j_{-p,n}(y) &= \psi\Phi_{p,n}\rho\Phi_{-p,n}^{-1}(y) \\
    &= \psi\Phi_{p,n}([y,-1]) \\
    &= \psi\Phi_{p,n}i_{p,n}f(n)^{-p}y \\
    &= \psi j_{p,n}f(n)^{-p}y = j_{q,n}\circ f(n)^{-p}(y).
\end{align*}
It follows that $\alpha' = \Phi_{q,n}^{-1}\circ\psi'\circ\Phi_{-p,n}$ is an orientation-preserving diffeomorphism
taking the $X_+(n)$-factor to the $X_+(n)$-factor. Defining $\widehat{\alpha}'$ and $\widecheck{\alpha}'$ as in the orientation-preserving case, we find $\widehat{\alpha}'(y) = f(n)^{-q}\widecheck{\alpha}f(n)^{-p}(y)$ and conclude $p = \pm q$. Thus, if we restrict our embeddings to the ones with $p>0$ there can be no intertwining orientation preserving or reversing diffeomorphism.
\end{proof}

\section{Stabilization and sums}\label{S:s}
In preparation for the proof of Theorem~\ref{T:B}, we now review some techniques for constructing interesting families of diffeomorphisms and embeddings.  The general principle underlying~\cite{auckly-ruberman:diffym} and the present paper is that stabilization (as defined below) can convert exotic smooth structures on $4$-manifolds into other kinds of exotic phenomena. This method can be iterated to build exotic families of diffeomorphisms, parameterized by spheres of increasing dimension. In turn, a fixed embedding of a submanifold can be composed with such a family of diffeomorphisms, yielding a family of embeddings that can often be shown to be exotic.

The simplest instance of this strategy (leading to exotic diffeomorphisms) is to start with $X$ and $X'$, distinct smooth structures on the same underlying topological $4$-manifold that are detected by a gauge theoretic invariant. Assume that $X\cs U\cong X'\cs U$ for some simple manifold $U$, typically $\sss$ or $\cptwo$.  (At present, this is known to hold with $U = \sss$ for many pairs of homeomorphic smooth closed $4$-manifolds, although not for all manifolds with boundary~\cite{kang:cork}.) After picking a specific identification of these two stabilized manifolds, an embedding or diffeomorphism that makes use of the stabilizing summand $U$ can often be shown to be exotic provided there is a way to recover information about smooth invariants of $X$ and $X'$ from the additional structure. 

Here is a first basic example, implicit in the early work of Donaldson~\cite{donaldson:polynomial}, using $U=\cptwo$. If $X\cs\cptwo\cong X'\cs\cptwo$, then after picking a specific identification of the stabilized manifolds, one may conclude that the sphere arising from the $\cpone\subset (\cptwo\setminus \text{pt})\subset X\cs\cptwo$ is topologically isotopic to, but smoothly distinct from the sphere arising from $\cpone\subset (\cptwo\setminus \text{pt})\subset X'\cs\cptwo$. (The topological isotopy is deduced from~\cite{perron:isotopy,quinn:isotopy}.) Indeed, if there was a diffeomorphism carrying one sphere to the other, blowing down the spheres would result in a diffeomorphism between $X$ and $X'$. A similar construction and argument may be made assuming $X\cs(S^2\times S^2)\cong X'\cs(S^2\times S^2)$, via surgery on embedded spheres of self-intersection $0$.  

An extension of this technique was introduced in~\cite{ruberman:isotopy} to study diffeomorphisms. The idea is that a self-diffeomorphism of $U$ gives rise to a  diffeomorphism of the stabilized manifold $X \cs U$ supported on $U$. For certain choices of $U$ and self-diffeomorphism, a gauge theoretic invariant of diffeomorphisms shows that if the diffeomorphism associated to the decomposition $X \cs U$ were isotopic to the one associated to the decomposition $X' \cs U$, then $X$ and $X'$ would have the same gauge theoretic invariants. 
As in the previous example, these smoothly non-isotopic diffeomorphisms are topologically isotopic.

\subsection{Submanifold sums}\label{S:ss}
{Many of the manifolds that go into our proof of Theorem~\ref{T:B} are constructed using 
\emph{submanifold sum} along a submanifold $\Sigma$, which we briefly review. Generally, we will write $N(\Sigma, X)$ for the normal bundle of $\Sigma$ in $X$, omitting the ambient manifold when it's clear. Assume that $\Sigma$ is embedded in $X$ and $V$ in such a way that the normal bundles are oppositely oriented, or in symbols $N(\Sigma,V) = N(\Sigma,X)^\vee$ where $N^\vee$ denotes the bundle with orientation opposite to $N$.  The submanifold sum is given by
\begin{equation}\label{E:sum}
X\cs_\Sigma V = (X\setminus N(\Sigma))\cup(V\setminus N(\Sigma))/\sim,
\end{equation}
where $N(\Sigma,X) \setminus\Sigma$ is identified to $N(\Sigma,V)\setminus\Sigma$ via a map that acts as $v\mapsto |v|^{-2}v$ on the fibers. }

The submanifold sum is a fairly general operation and depends only on the embedding of the normal bundle up to isotopy. In dimension $4$, this operation is most often done along a surface, typically denoted by $\Sigma$, but we will also consider sums along $3$-manifolds (typically denoted by $Y$). For sums along $3$-manifolds, (as pictured in Figure~\ref{F:cd1sum} below), the isotopy type is determined by orientations and the isotopy type of the embedding of the submanifold. In section~\ref{S:inv}, we will consider sums along families and so in the following remark discuss the potential dependence on choices.  The bottom line is that in the family setting, the case of codimension one submanifold sums is a little simpler than codimension two.

In codimension one, since $Y$ and $V$ are both oriented, the normal bundle is $N(Y) = Y\times [-1,1]$ and this identification is unique up to a choice in the orientation-preserving automorphism group of the normal bundle, which is contractible.
Therefore $\emb(Y,V)$ is homotopy equivalent to $\emb(N(Y),V)$. On the other hand, if $\Sigma$ has codimension two, the embeddings of the normal bundle extending a given embedding of the submanifold differ up to isotopy by a bundle automorphism. The space of bundle automorphisms may be identified with maps from $\Sigma$ into the circle. Its path components, each of which are contractible, are given by $H^1(\Sigma; \Z)$.

There is a minor variation of the submanifold sum in codimension one that we will use, and we explain the notation here. If we write $X \cs_Y (-V)$, then this should be taken as an oriented manifold where the orientation agrees with the given orientation on $X$.  More formally, when $Y$ separates both $X$ and $V$, this can be written as follows. Label the components of $X \setminus Y$ as $X_\pm$ so that $Y \times [-1,0) \subset X_{-}$, and do the same for $V$. Then $X\cs_Y (-V)$ would be written as
\[  
\left( (X_{-} \cup V_{+})/\sim \right) \coprod \left( (X_{+} \cup V_{-})/\sim \right)
\]
where `$\sim$' indicates the same inversion on $N(Y)$ as in the usual submanifold sum. This process is depicted in Figure~\ref{F:cd1sum}; we also do this in families.  

Using these ideas, here is how to build many collections of exotic embeddings.  The construction, and the related construction of higher-parameter families of embeddings, uses as building blocks some $4$-manifolds with specific properties. We describe a particular collection of such manifolds below in Section~\ref{S:blocks}, but it's easier to see the overall principle if one doesn't worry too much about the specific ingredients.

Let $X_p$ be a collection of exotic $4$-manifolds that are distinguished by some gauge theoretic invariant $I$ (say $I = D$ for Donaldson or $I = \sw$ for Seiberg-Witten) so that
\[
I(X_p) = I(X_q) \ \text{implies} \ p = q.
\]
Suppose that each $X_p$ has  an embedding
 $\nu_p\colon N(\Sigma)\hookrightarrow X_p$, here $\Sigma$ is a surface. By convention, exotic manifolds are homeomorphic; in this case we further assume that there are homeomorphisms that respect the embedding of $N(\Sigma)$, i.e., 
\[
\psi_p\colon X_p\to X_0, \ \text{so that} \ \psi_p\circ\nu_p = \nu_0.
\]
We now let $U$ be a stabilizing manifold so that $X_p\cs U$ is diffeomorphic to $X_0\cs U$, where the diffeomorphism respects the embeddings $\nu_p$. It is not hard to find such diffeomorphisms; see for example~\cite{auckly-kim-melvin-ruberman:isotopy}. 
The point of the embedded copy 
of $\Sigma$ is to allow a submanifold sum of a `buffer' manifold $V$ containing a copy of $\Sigma$, giving the construction greater flexibility.

Suppose further that the collection $\{X_p\cs_\Sigma V\}$  is exotic, and continues to be distinguished by the invariant $I$. This would be the case for $V = \cptwobar$ when the submanifolds sum is just connected sum (along a $3$-sphere) and $I$ is a Seiberg-Witten or Donaldson invariant.
Let 
\[
\iota\colon Y\hookrightarrow V\setminus\Sigma
\]
be a smooth, separating embedding of a $3$-manifold so that
$V\setminus\iota(Y) = V_0\perp\!\!\perp V_1$ with $\iota(\Sigma)\subset V_0$. Set $\widehat{V} = V \cs U$ where the connected sum takes place in $V_1$, so that $\widehat{V} =\overline{V_0}\cup_Y(\overline{V_1\cs U})$.
The assumption that the $X_p$ become diffeomorphic after connected sum with $U$, respecting the embedding of $\Sigma$, implies that there is a diffeomorphism
\[ \varphi_p\colon X_p\cs_\Sigma\widehat{V} \to X_0\cs_\Sigma\widehat{V}.\]
We also take $\varphi_0 = 1$ and $\psi_0 = 1$.
It will be convenient to have  $\varphi_p$ be homotopic to $\psi_p\cs_\Sigma 1_{\widehat{V}}$ for all $p$. Since we are dealing with simply connected $4$-manifolds, two maps are homotopic if and only if they induce the same isomorphism on homology~\cite{cochran-habegger:homotopy}. By a classic theorem of Wall~\cite{wall:diffeomorphisms} one can compose an initial choice of $\varphi_p$ with a diffeomorphism to ensure that this is the case if $X_p$ is indefinite and $U=\sss$. In this case a result of Kreck~\cite{kreck:isotopy} states that $\varphi_p(\psi_p\cs_\Sigma 1_{\widehat{V}})^{-1}$ is pseudoisotopic to the identity, and so work of Perron~\cite{perron:isotopy} and Quinn~\cite{quinn:isotopy} implies that it is topologically isotopic to the identity.
 Notice that both $\varphi_p$ and $(\psi_p\cs_\Sigma 1_{\widehat{V}})^{-1}$ restrict to smooth maps on $\widehat{V}\setminus\Sigma$ and in particular are smooth on $Y$. Thus we can define {\em smooth} embeddings
\[j_p\colon Y\hookrightarrow X_0\cs_\Sigma\widehat{V},\]
by $j_p = \varphi_p(\psi_p\cs_\Sigma 1_{\widehat{V}})^{-1}\circ\iota$, and immediately conclude that the maps $j_p$ are topologically isotopic 
o one another.
\begin{figure}[ht]
\labellist
\pinlabel {$X_p$} [ ] at 60 98
\pinlabel {$Y$} [ ] at 180 72
\pinlabel {$\Sigma$} [ ] at 104 89
\pinlabel {$\Sigma$} [ ] at 173 89
\pinlabel {$V_0$} [ ] at 148 100
\pinlabel {$V_1$} [ ] at 195 100
\pinlabel {$U$} [ ] at 250 98
\pinlabel {$Y$} [ ] at 80 -10
\pinlabel {$j_p(Y)$} [ ] at 256 -10
\pinlabel {$X_p\cs_\Sigma \widehat{V}$} [ ] at 32 23
\pinlabel {$\widehat{V}$} [ ] at 105 25
\pinlabel {$X_0\cs_\Sigma \widehat{V}$} [ ] at 207 23
\pinlabel {$\varphi_p$} [ ] at 160 33

\endlabellist
\centering
\includegraphics[scale = 1]{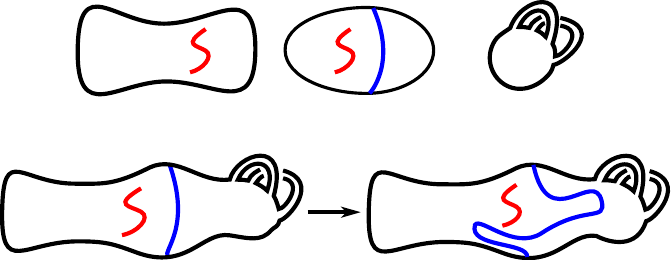}
\vspace*{2ex}
  \caption{Construction of the embedding $j_p$}\label{F:subsum}
\end{figure}

It remains to show that the embeddings $j_p$ are smoothly distinct.
Assuming that there is a diffeomorphism $f\colon X_0\cs_\Sigma\widehat{V} \to X_0\cs_\Sigma\widehat{V}$ so that $f\circ j_p = j_q$, one would conclude that 
\[\overline{\left((X_0\cs_\Sigma\widehat{V})\setminus j_p(Y)\right)_0}\bigcup_Y\overline{V_1}\cong \overline{\left((X_0\cs_\Sigma\widehat{V})\setminus j_q(Y)\right)_0}\bigcup_Y\overline{V_1}.\]
Here the $0$-subscript refers to the component disjoint from $U\setminus\text{pt}$. Using extensions of suitable restrictions of $\varphi_p$ and $\varphi_q$ one would further conclude that 
$X_p\cs_\Sigma{V}\cong X_q\cs_\Sigma{V}$, implying that $p = q$.

\begin{remark}
Theorem~\ref{T:A} draws a finer distinction between smoothly knotted embeddings. The complements of the embeddings in that theorem are independent of the choice of embedding. The fact that the embeddings are not smoothly equivalent follows from  the existence of non-extendible diffeomorphisms on a manifold with boundary. The paper~\cite{konno-mukherjee-taniguchi:codim1} constructs a number of other interesting examples of smoothly knotted embeddings based on a different exotic phenomenon, namely the non-smoothablility of certain homeomorphisms.
\end{remark}

\subsection{Higher parameter families}\label{s:recursion}The construction of exotic diffeomorphisms via stably equivalent smooth structures combines with further stabilizations by a manifold $U$ (such as $\cptwo$ or $\sss$, often combined with a buffer manifold as in the previous subsection) to yield exotic spherical families of diffeomorphisms. The construction is recursive, raising the dimension of the parameterizing sphere by one at each step. At the base level, let $X$ and $X'$ be distinct smooth manifolds with
a  diffeomorphism $\varphi\colon X\cs U\to X'\cs U$ that induces the same isomorphism on homology as $\psi \cs 1_U$ for a homeomorphism $\psi\colon X \to X'$ and preserves the embeddings of $\Sigma$, and write $Z_0$ for $X\cs U$. Let $f\colon U\to U$ be a diffeomorphism acting non-trivially on $H_2(U)$, defined using $2$-spheres embedded in $U$. A prototype might be $U = \cptwo$, with $f$ defined as complex conjugation, or equivalently as a reflection in the $2$-sphere $\cpone$. In examples, the construction is slightly more elaborate.

In this setting, it is reasonable to conjecture that 
\begin{equation}\label{callmeal}
\alpha^0 = [\varphi,1\cs f] = \varphi\circ(1_{X'}\cs f)\varphi^{-1}(1_X\cs f)^{-1}\colon Z_0\to Z_0,
\end{equation}
is an exotic diffeomorphism. Indeed the original construction of~\cite{ruberman:isotopy} was of this form, and there are now a number of additional examples that use the same technique.  It is worth noting that the commutator  in Equation~\eqref{callmeal} has embedded in it a crucial switch from $X$ to $X'$; other commutator-like constructions of this sort will be used without further comment.  A particular choice of $\alpha^0$ and $U$ is given in Section~\ref{S:blocks}.
To describe the recursion, imagine that 
 $\alpha^k\colon S^k\to \diff(Z_k)$ represents a non-trivial element of $\pi_k(\diff(Z_k))$, and
  that $\alpha^k\cs 1_U$ represents the trivial element of $\pi_k(\diff(Z_k\cs U))$. In what follows, we will be assuming that all of the families (and homotopy of families) of diffeomorphisms are relative to $N(\Sigma)$, but we omit it from the notation for sanity's sake.

The presence of the summand of $U$ in $\widehat{V}$ implies that 
$\alpha^k\cs_\Sigma 1_{\widehat{V}}$ represents the trivial element of $\pi_k(\diff(Z_k\cs_\Sigma\widehat{V}))$. Let
$F\colon I\times S^k\to \diff(Z_k\cs_\Sigma\widehat{V})$ be the corresponding null-homotopy. Set $Z_{k+1} = Z_k\cs_\Sigma\widehat{V}$ and let $\beta_0\colon \widehat{V} = V\cs U \to \widehat{V}$ be the diffeomorphism constructed as in the base case, {in other words, $\beta_0=1_V \cs_\Sigma f$}.
We then define $\alpha^{k+1}\colon I\times S^k\to\diff(Z_{k+1})$ by $\alpha^{k+1} = [F,1_{Z_k}\cs_\Sigma\beta_0]$. In other words given $(t,v)\in I\times S^k$ and $z\in Z_{k+1}$ we have
\begin{equation}\label{callmealk}
\alpha^{k+1}(t,v)(z) = F(t,v)(1_{Z_k}\cs_\Sigma\beta_0)F(t,v)^{-1}(1_{Z_k}\cs_\Sigma\beta_0)^{-1}(z).
\end{equation}
Since $F(0,v) = 1_{Z_k}$, and $F(1,v) = \alpha^k\cs_\Sigma 1_{\widehat{V}}$ commutes with $1_{Z_k}\cs_\Sigma\beta_0$, we see that $\alpha^{k+1}|_{\partial I\times S^k} = 1_{Z_{k+1}}$ so it induces a map on the
suspension of $S^k$, i.e., on $S^{k+1}$. In fact it also restricts to the identity on $I\times \{v_0\}$ so it also induces a map on the reduced suspension. 

This process, which gives rise to families of diffeomorphisms $\alpha^k$ as well as associated families of embeddings, is represented schematically in Figure~\ref{F:cartoon}.
In this figure, we have represented a diffeomorphism or family of diffeomorphisms by a red curve and a blue curve. One can imagine that the diffeomorphism takes the red to the blue (or that a spherical family moves these curves around in some fashion.) The smooth isotopy $F$ does not exist without the $U$ summand in $\widehat{V}$, so one imagines the isotopy would give a family moving the blue curve to the red. However, this must pass through the right-most copy of $U$, as represented by a pair of handles on the far right of the picture, and this in some sense is what makes it an `interesting' isotopy.

In section~\ref{S:inv}, we will discuss invariants that can be used to show that families constructed in this way from suitable {initial collections of $4$-manifolds} are non-trivial. 

\begin{figure}[h]
\labellist
\pinlabel {$Z_k$} [ ] at 100 -10
\pinlabel {$\alpha^k$} [ ] at 55 25
\pinlabel {$\Sigma$} [ ] at 93 15
\pinlabel {$\Sigma$} [ ] at 268 15
\pinlabel {$Z_k\cs_\Sigma \widehat{V}$} [ ] at 273 -10
\pinlabel {$F$} [ ] at 245 65
\pinlabel {$U$} [ ] at 55 50
\pinlabel {$U$} [ ] at 210 50
\pinlabel {$U$} [ ] at 389 50
\endlabellist
\centering
\includegraphics[scale = 1]{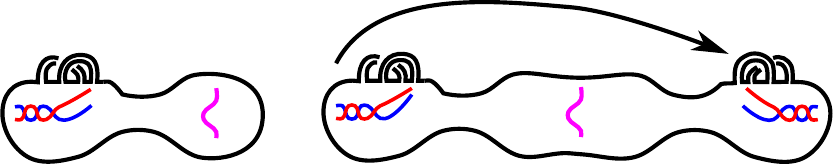}
\vspace*{1ex}
  \caption{Construction cartoon}\label{F:cartoon}
\end{figure}

\section{Families and invariants}\label{S:inv}
Families of diffeomorphisms, manifolds, and embeddings are closely related. For instance, a family of manifolds diffeomorphic to $Z$ is, by definition, a bundle over a base $B$ with fiber diffeomorphic to $Z$ and structure group $\diff(Z)$, and hence is equivalent to a map from $B$ to the classifying space $B\diff(Z)$.  We discuss some of the relations between these three types of families here and then turn to the definition of various invariants that are useful to distinguish these families. The connection between families of diffeomorphisms and bundles is most visible in the case of spherical families.

\begin{definition}
     If $\alpha\colon S^k\to \diff(Z)$ is a spherical family of diffeomorphisms, the family of spaces associated to $\alpha$ via the \emph{clutching construction} is
    \[ \mathcal{Z}[\alpha] = \{\pm 1\}\times D^{k+1} \times Z/\sim,\]
    where $(1,v,z)\sim (-1,v,\alpha(v)(z))$ for $|v| = 1$. If we view $S^{k+1} = \{\pm 1\}\times D^{k+1}/\sim$, then there is a natural fiber bundle projection $\mathcal{Z}[\alpha]\to S^{k+1}$.   
\end{definition}

We have the following facts about the clutching construction.
\begin{lemma}\label{fam} 
{\bf (a)} Let $\alpha\colon S^k\to \diff(Z)$ be a spherical family of diffeomorphisms and $F\colon I\times S^k\to\diff(Z)$ a contraction to the constant family $\bone_Z$ given by $1_Z$ at each point of $S^k$.  Then there is a family isomorphism
$
    \Phi\colon \mathcal{Z}[\bone_Z]\to \mathcal{Z}[\alpha]
$
given by
    \[\Phi([\epsilon,v,z]) = \begin{cases} [1,v,z], & \text{if} \ \epsilon = 1, \\
    [-1,v,F(|v|,v)(z)], & \text{if} \ \epsilon = -1.
    \end{cases}
    \]
{\bf (b)} Let $\beta\colon Z'\to Z$ be a diffeomorphism.   Then there is a family isomorphism
\[
\xi\colon \mathcal{Z'}[\beta^{-1}\alpha\beta]\to \mathcal{Z}[\alpha],
\]
given by $\xi([\epsilon,v,w]) = [\epsilon,v,\beta(w)]$.
\end{lemma}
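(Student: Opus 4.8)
The plan is to prove both statements by direct inspection, since in each case the proposed map visibly preserves the first two coordinates and hence covers the identity on $S^{k+1} = \{\pm 1\}\times D^{k+1}/\!\sim$; the only genuine work is to check that the formula respects the identifications defining the clutching bundle and that it admits a fiber-preserving inverse. Throughout I would normalize the null-homotopy so that $F(0,\cdot)\equiv 1_Z$ and $F(1,\cdot)=\alpha$, matching the convention of the recursion in Section~\ref{s:recursion}; as will be clear, the endpoint condition $F(1,\cdot)=\alpha$ is exactly what makes $\Phi$ descend.

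For part \textbf{(a)}, the nontrivial identifications in $\mathcal{Z}[\bone_Z]$ occur only over $|v|=1$, where $(1,v,z)\sim(-1,v,z)$. Applying $\Phi$ to the two representatives gives $[1,v,z]$ and $[-1,v,F(1,v)(z)] = [-1,v,\alpha(v)(z)]$, and these agree in $\mathcal{Z}[\alpha]$ precisely by its clutching relation $(1,v,z)\sim(-1,v,\alpha(v)(z))$; hence $\Phi$ is well defined. I would then exhibit the inverse
\[
\Phi^{-1}([\epsilon,v,z]) = \begin{cases} [1,v,z], & \epsilon = 1,\\ [-1,v,F(|v|,v)^{-1}(z)], & \epsilon = -1,\end{cases}
\]
which is smooth because inversion in $\diff(Z)$ is smooth, and verify $\Phi^{-1}\circ\Phi=\id$ and $\Phi\circ\Phi^{-1}=\id$ by the same two-case check. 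Part \textbf{(b)} is even more direct: the only identifications in $\mathcal{Z'}[\beta^{-1}\alpha\beta]$ are $(1,v,w)\sim(-1,v,\beta^{-1}\alpha(v)\beta(w))$ over $|v|=1$, and sending both sides through $\xi$ yields $[1,v,\beta(w)]$ and $[-1,v,\alpha(v)\beta(w)]$, which are identified in $\mathcal{Z}[\alpha]$. Its inverse is $[\epsilon,v,z]\mapsto[\epsilon,v,\beta^{-1}(z)]$, and fiber-preservation is immediate since $\xi$ fixes the first two coordinates.

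The main (and essentially only) obstacle is smoothness of $\Phi$ over the lower hemisphere in part (a): the factor $F(|v|,v)$ involves $v\mapsto|v|$, which fails to be differentiable at the center $v=0$ of $D^{k+1}$. I would resolve this by reparametrizing the homotopy so that $F(t,\cdot)\equiv 1_Z$ for all $t$ near $0$; then $v\mapsto F(|v|,v)$ is locally constant, hence smooth, near $v=0$, and it is manifestly smooth elsewhere. With this harmless adjustment, $\Phi$ and $\Phi^{-1}$ are genuine fiber-preserving diffeomorphisms over $S^{k+1}$, and the fiberwise maps ($\id$ over the upper hemisphere and $F(|v|,v)\in\diff(Z)$ over the lower) are diffeomorphisms of $Z$, completing both verifications.
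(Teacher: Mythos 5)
Your proposal is correct and follows exactly the route the paper intends: the paper offers no written argument beyond the remark that the lemma ``follows by a direct check of the definitions,'' and your two-case verification on the identification locus $|v|=1$, together with the explicit inverses, is precisely that check (with the same endpoint convention $F(0,\cdot)=\bone_Z$, $F(1,\cdot)=\alpha$ used in Section~\ref{s:recursion}). Your additional care about smoothness of $v\mapsto F(|v|,v)$ at the center of $D^{k+1}$, fixed by making $F$ constant near $t=0$, is a legitimate refinement the paper passes over silently.
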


\begin{remark}
    The lemma follows by a direct check of the definitions. The second isomorphism $\xi$ would also work if $\beta$ was a spherical family of diffeomorphisms.
\end{remark}

We now describe how to use these ideas to construct and detect interesting families of embeddings.
The construction of families begins the same way as the construction of smoothly knotted codimension one embeddings in Section~\ref{S:ss}.
The ingredients that go into the construction are 
\begin{enumerate}
    \item A collection of homeomorphic smooth manifolds $X_p$ together with a family of invariants $I_{\cc_q}$, parameterized by data on the manifold, so that $I_{\cc_q}(X_p) = \gamma_p\delta_{pq}$ for some non-zero constants $\gamma_p$.  In the Donaldson case,  $\cc_q$ describes a certain $\SO(3)$ bundle, while in Seiberg-Witten case, it specifies a $\spinc$ structure.
    \item Embeddings $\nu_p\colon N(\Sigma_1\perp\!\!\perp\Sigma_2)\hookrightarrow X_p$. 
    \item A smooth buffer manifold $V = \overline{V_0}\cup_Y\overline{V_1}$ with $N(\Sigma_2)\hookrightarrow V_0$.
    \item A collection of manifolds $Z_k$ together with topologically trivial families of diffeomorphisms  $\alpha^k\colon S^k\to\diff(Z_k,N(\Sigma_1))$  so that $\alpha^k\cs 1_U$ smoothly contracts and the corresponding $I^{H_k\diff}_{\cc_q}(\alpha^k\cs_{\Sigma_1}1_{X_p\cs_{\Sigma_2}V}) = \gamma'_p\delta_{pq}$ for some non-zero constants $\gamma'_p$ and data $q$. 
\end{enumerate}
As before, we will have a manifold $U$ (typically $S^2 \times S^2$ or $(S^2 \times S^2)\cs \cptwobar$) and write $\widehat{V} = V \cs U$ where the connected sum takes place in $V_1$.

Notice that we just need one spherical family of diffeomorphisms for each dimension and use a large collection of exotic embeddings of a submanifold to build a large collection of families of embeddings in each dimension.

We now provide a recipe for combining these ingredients into examples of the proper form. We will list some assumptions along the way. Let $\psi_p\colon X_p\to X_0$ be homeomorphisms so that $\psi_p\circ\nu_p = \nu_0$, and $\varphi_p\colon X_p\cs_{\Sigma_2}\widehat{V}\to X_0\cs_{\Sigma_2}\widehat{V}$ be diffeomorphisms homotopic to $\psi_p\cs_{\Sigma_2}1_{\widehat{V}}$.
This implies that there is a topological isotopy $G^p\colon I\to\homeo(X_0\cs_{\Sigma_2}\widehat{V})$ with 
$G_1^p = \varphi_p(\psi_p\cs_{\Sigma_2}1_{\widehat{V}})^{-1}$ and $G_0^p = 1_{X_0\cs_{\Sigma_2}\widehat{V}}$. In the examples  used in this paper, the isotopy $G^p$ comes from the `key stable isotopy' in~\cite[\S 2]{auckly-kim-melvin-ruberman:isotopy} and so may be chosen to be the identity on $N(\Sigma_1)$. This additional property holds in many other settings; for example if the complement of $\Sigma_1$ has trivial~\cite{sunukjian:surfaces}, infinite cyclic~\cite{conway-powell:z-surfaces} or in some cases finite cyclic~\cite{kim-ruberman:surfaces} fundamental group.
Define $j_p\colon Y\to X_0\cs_{\Sigma_2}\widehat{V}$ by $j_p = \varphi_p(\psi_p\cs_{\Sigma_2}1_{\widehat{V}})^{-1}\iota$. Notice that this is smooth because the restriction of $\varphi_p(\psi_p\cs_{\Sigma_2}1_{\widehat{V}})^{-1}$ to the image of the embedding $\iota\colon Y\to \widehat{V}\setminus\Sigma_2$ is smooth.
Set
\[
Z_{k}' = Z_k\cs_{\Sigma_1} \left(X_0\cs_{\Sigma_2}\widehat{V}\right).
\]
Notice that $X_0\cs_{\Sigma_2}\widehat{V}$ contains a $U$-summand. This implies that \hfill\newline $\alpha^k\cs_{\Sigma_1}1_{X_0\cs_{\Sigma_2}\widehat{V}}\colon S^k\to\diff(Z_{k}\cs_{\Sigma_1} X_0\cs_{\Sigma_2}\widehat{V})$ smoothly contracts. By Lemma~\ref{fam} we conclude that there is a bundle isomorphism
\[
\Phi^k\colon \mathcal{Z}[\alpha^k\cs_{\Sigma_1}1_{X_0\cs_{\Sigma_2}\widehat{V}}]\to S^{k+1}\times Z_k'.
\]
Since $\alpha^k\cs_{\Sigma_1}1_{X_0\cs_{\Sigma_2}\widehat{V}}$ acts trivially on $\widehat{V}\setminus\Sigma_2$, the embedding $j_p$ extends to a family of embeddings $\underline{j_p}\colon S^{k+1}\times Y \to \mathcal{Z}[\alpha^k\cs_{\Sigma_1}1_{X_0\cs_{\Sigma_2}\widehat{V}}]$ given by $\underline{j_p}([\epsilon,v],y) = [\epsilon,v,j_p(y)]$.
Now define 
\[
J^{k+1}_p\colon  S^{k+1}\to\emb(Y,Z_{k}'), \ \text{by} \ J^{k+1}_p([\epsilon,v])(y) = \text{pr}_{Z_{k}'}\Phi^k(\underline{j_p}([\epsilon,v],y))
\]

\begin{lemma}\label{L:same}
The classes $[J^{k+1}_p]$ and $[J^{k+1}_0]$$\pi_{k+1}(\emb^{\Top}(Y,Z_{k}'))$.
\end{lemma}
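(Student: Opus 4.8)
The plan is to exploit the fact that $J^{k+1}_p$ and $J^{k+1}_0$ are built from \emph{identical} clutching data — the same family $\alpha^k$, the same smooth contraction $F$ of $\alpha^k\cs_{\Sigma_1}1_{X_0\cs_{\Sigma_2}\widehat{V}}$, and hence the same bundle isomorphism $\Phi^k$ — and differ only in the fiberwise embedding that is inserted, namely $j_p=\varphi_p(\psi_p\cs_{\Sigma_2}1_{\widehat{V}})^{-1}\iota$ versus $j_0=\iota$. Since these two embeddings are connected by the topological isotopy $t\mapsto G^p_t\circ\iota$ supplied by the setup (with $G^p_0=1$ and $G^p_1=\varphi_p(\psi_p\cs_{\Sigma_2}1_{\widehat{V}})^{-1}$), the idea is to feed this entire isotopy, rather than a single embedding, into the construction and read off a homotopy of families.

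Concretely, write $j_{p,t}=G^p_t\circ\iota\colon Y\to X_0\cs_{\Sigma_2}\widehat{V}$, so that $j_{p,0}=\iota=j_0$ and $j_{p,1}=j_p$. I would first observe that each $j_{p,t}(Y)$ lies in the $X_0\cs_{\Sigma_2}\widehat{V}$ summand of $Z_k'=Z_k\cs_{\Sigma_1}(X_0\cs_{\Sigma_2}\widehat{V})$, because $G^p_t$ is a self-homeomorphism of that summand; consequently $\alpha^k\cs_{\Sigma_1}1_{X_0\cs_{\Sigma_2}\widehat{V}}$, which is the identity off the $Z_k$ factor, fixes $j_{p,t}(Y)$ pointwise for every $t$. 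This is exactly the condition needed to extend $j_{p,t}$ to a fiberwise (topological) embedding $\underline{j_{p,t}}([\epsilon,v],y)=[\epsilon,v,j_{p,t}(y)]$ of $S^{k+1}\times Y$ into $\mathcal{Z}[\alpha^k\cs_{\Sigma_1}1_{X_0\cs_{\Sigma_2}\widehat{V}}]$, respecting the clutching identifications. Setting $H_t=\text{pr}_{Z_k'}\circ\Phi^k\circ\underline{j_{p,t}}$ then produces a one-parameter family of maps $H_t\colon S^{k+1}\to\emb^{\Top}(Y,Z_k')$, continuous in $t$, with $H_0=J^{k+1}_0$ and $H_1=J^{k+1}_p$.

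Finally, I would record the basepoint bookkeeping. On the summand where $\alpha^k\cs 1$ acts trivially, the clutching formula of Lemma~\ref{fam}(a) shows that, at the basepoint $v_0\in S^{k+1}$, the family $H_t$ evaluates to $j_{p,t}$; thus $H$ is a free homotopy from $J^{k+1}_0$ to $J^{k+1}_p$ whose basepoint track is precisely the topological isotopy $t\mapsto G^p_t\iota$ from $\iota$ to $j_p$. Applying the change-of-basepoint isomorphism determined by this isotopy identifies $[J^{k+1}_0]$ with $[J^{k+1}_p]$ in $\pi_{k+1}(\emb^{\Top}(Y,Z_k'))$, as claimed. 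I expect the one genuinely non-formal point to be the verification underlying the second paragraph — that interpolating by $G^p_t\iota$ keeps the fiberwise embedding compatible with the clutching (equivalently, that $\alpha^k\cs 1$ fixes each $j_{p,t}(Y)$), so that the homotopy truly stays inside $\emb^{\Top}$ — together with the care needed in tracking the moving basepoint; everything else is a direct unwinding of the definitions.
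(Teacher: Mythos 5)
Your proposal is correct and follows essentially the same route as the paper: the paper's one-line proof also uses the fact that the topological isotopy $G^p$ is supported in $X_0\cs_{\Sigma_2}\widehat{V}\setminus\Sigma_1$ (hence is fixed by the clutching data) to promote $t\mapsto G^p_t\circ j_0$ to a homotopy $\underline{G}^p_t(y)=\text{pr}_{Z_k'}\Phi^k([\epsilon,v,G^p_t j_0(y)])$ of families in $\emb^{\Top}(Y,Z_k')$, which is exactly your $H_t$. Your additional remarks on compatibility with the clutching identifications and on basepoint tracking are just a more explicit unwinding of what the paper leaves implicit.
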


\begin{proof}
As discussed above, we are working in a setting where the topological isotopy $G^p$ has support in $X_0 \cs_{\Sigma_2}\widehat{V}\setminus\Sigma_1$ so extends to define a homotopy 
\[
\underline{G}^p\colon I\to \emb(Y,Z_{k}'), \ \text{given by} \ \underline{G}_t^p(y) = \text{pr}_{Z_{k}'}\Phi_k([\epsilon,v,G^p_tj_0(y)]).
\qedhere
\] 
\end{proof}

There is a family version of the submanifold sum, both for sums along surfaces and along $3$-manifolds. As we will see, it interacts well with our constructions of families of diffeomorphisms.
\begin{definition}
Let $J\colon \Xi\to \emb(N(Y),Z)$ and $K\colon \Xi\to \emb(N(Y),W)$ be given and set
\[\mathcal{Z}[\![J,K]\!] = \left((\Xi\times Z)\setminus J(Y)\right)\perp\!\!\perp \left((\Xi\times W)\setminus K(Y)\right)/\sim\] where the fibers of the normal bundles are identified via inversion in the unit sphere as in~\eqref{E:sum}.
\end{definition}

\begin{remark}
    Note that if $\Xi$ is connected, then we are getting  an embedding of $Y$ in a manifold diffeomorphic to $Z\cs_Y W$. But when done in the family setting, the resulting family may well be non-trivial. One could go further and define a family notion of submanifold sum for family embeddings into non-trivial families. When the $K$-family of embeddings is clear from the context, we will remove it from the notation.  
\end{remark}

Let $Z$ contain a summand (along a surface) of $\widehat{V}$ as in our description of smoothly knotted embeddings in Section~\ref{S:ss}, and let $\iota\colon N(Y)\hookrightarrow V$ be the model embedding and 
$J\colon \Xi^{k+1}\to\emb(N(Y),Z)$ be a family of embeddings representing a class in $H_{k+1}(\emb(N(Y),Z))$. The family submanifold sum
\[
{(\Xi^{k+1}\times Z)\cup_J (\Xi^{k+1}\times -V)}
\]
will consist of two components. 
Let $\mathcal{Z}[\![J]\!]$ denote the component that does not include the copy of $U$ in each fiber. We call it the preferred component. The component with $U$ will be called the discarded component. See Figure~\ref{F:cd1sum}, which omits the parameter space $\Xi$.

\vspace*{1ex}
\begin{figure}[ht]
\labellist
\pinlabel {Discarded component} [ ] at 325 -10
\pinlabel {Preferred component} [ ] at 80 -10
\endlabellist
\centering
\includegraphics[scale = 1]{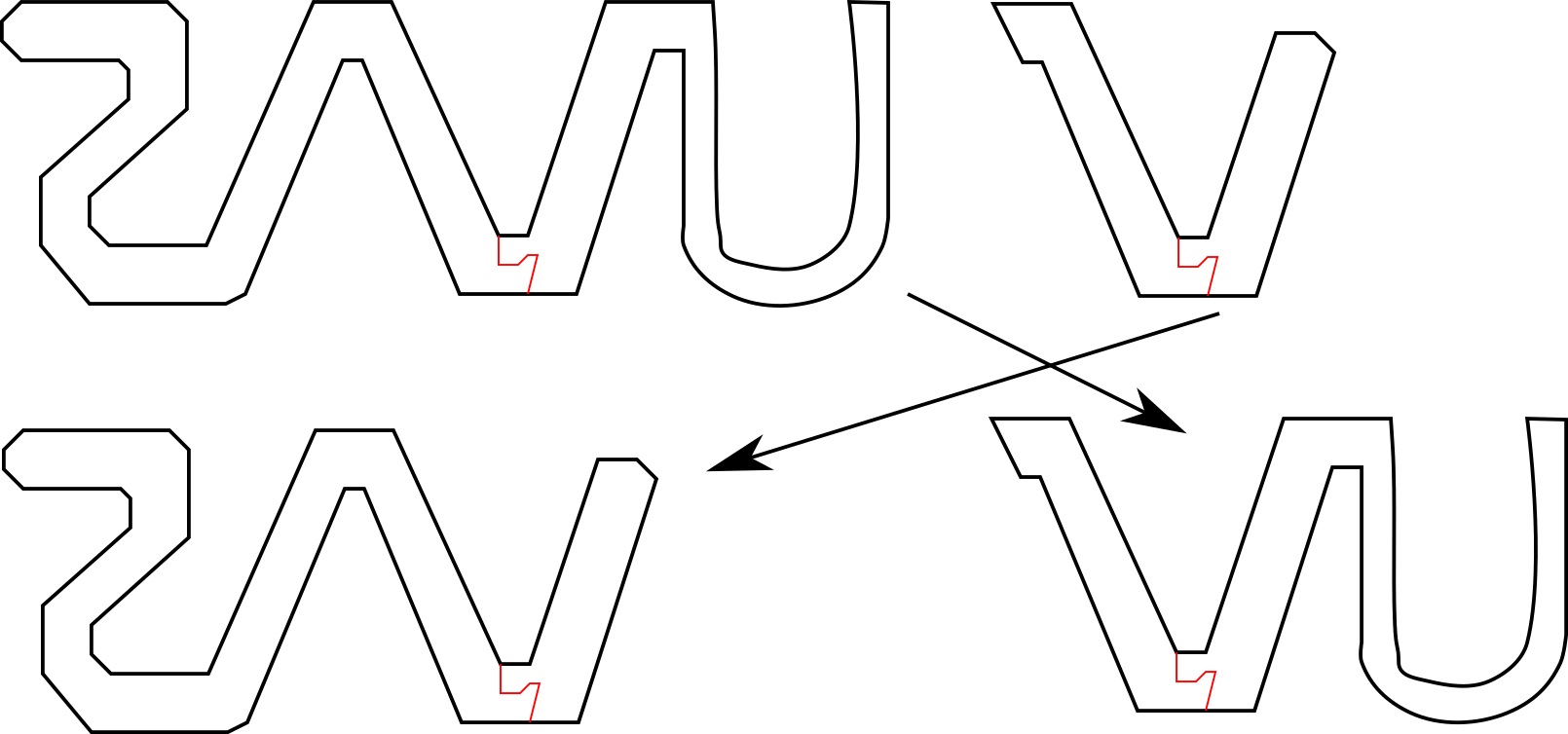}
\vspace*{2ex}
  \caption{Codimension one submanifold sum}\label{F:cd1sum}
\end{figure}

Let us assume that that the basic $I_{\cc_q}$
extends to an invariant of families of manifolds, denoted $\mathbb{I}_{\cc_q}$. Given this assumption, the invariant for a family of embeddings is given by
\[
I^{H_{k+1}\emb}_{\cc_q}(J) = \mathbb{I}_{\cc_q}(\mathcal{Z}[\![J]\!]).
\]

\begin{lemma}
The classes $[J^{k+1}_p]$ are independent in  $\pi_{k+1}(\emb(Y,Z_{k}')) $.
\end{lemma}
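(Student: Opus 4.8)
The plan is to show that the family embedding invariant $I^{H_{k+1}\emb}_{\cc_q}$ separates the classes $[J^{k+1}_p]$, and then deduce linear independence from a nondegeneracy argument. First I would identify the preferred component $\mathcal{Z}[\![J^{k+1}_p]\!]$ arising from the codimension-one family submanifold sum of the embedding family $J^{k+1}_p$ with the buffer $-V$. By construction, cutting along $\underline{j_p}(Y)$ and discarding the component containing the residual $U$-summand produces a family of closed $4$-manifolds whose generic fiber is diffeomorphic to the preferred component of $(X_0\cs_{\Sigma_2}\widehat V)\cs_Y(-V)$; after reassembling across the clutching sphere $S^{k+1}$, the parameterized total space recovers, up to the family isomorphisms $\Phi^k$ of Lemma~\ref{fam}, the clutched family built from $\alpha^k\cs_{\Sigma_1}1_{X_p\cs_{\Sigma_2}V}$. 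The key point is that the submanifold sum with $-V$ cancels the buffer $\widehat V$ down to $V$ and removes the stabilizing $U$, so the preferred component is diffeomorphic (as a family) to $\mathcal{Z}[\alpha^k\cs_{\Sigma_1}1_{X_p\cs_{\Sigma_2}V}]$.

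Next I would apply the family invariant. By hypothesis (4) in the list of ingredients, the spherical family $\alpha^k\cs_{\Sigma_1}1_{X_p\cs_{\Sigma_2}V}$ satisfies $I^{H_k\diff}_{\cc_q}(\alpha^k\cs_{\Sigma_1}1_{X_p\cs_{\Sigma_2}V}) = \gamma'_p\delta_{pq}$. Since $\mathbb{I}_{\cc_q}$ is the extension of $I_{\cc_q}$ to families, and since the clutching construction relates the family diffeomorphism invariant to the invariant of the clutched manifold family, I would argue that
\[
I^{H_{k+1}\emb}_{\cc_q}(J^{k+1}_p) = \mathbb{I}_{\cc_q}\!\left(\mathcal{Z}[\![J^{k+1}_p]\!]\right) = \gamma'_p\,\delta_{pq},
\]
possibly up to a fixed nonzero normalization independent of $p$. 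The essential input is naturality: the diffeomorphism $\varphi_p$ used to define $j_p$ is homotopic to $\psi_p\cs_{\Sigma_2}1_{\widehat V}$, so on homology the family is carried to the standard model, and the invariant computes the gauge-theoretic data of $X_p\cs_{\Sigma_2}V$, which by ingredient (1) detects $p$.

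Given the evaluation above, independence follows formally. Suppose $\sum_p a_p[J^{k+1}_p] = 0$ in $\pi_{k+1}(\emb(Y,Z'_k))$ (or its Hurewicz image in homology). Applying the homomorphism $I^{H_{k+1}\emb}_{\cc_q}$—which I would first need to check is additive on the relevant connect-sum/disjoint-union structure of the parameterized family invariant—yields $\sum_p a_p\gamma'_p\delta_{pq} = a_q\gamma'_q = 0$ for each $q$. Since each $\gamma'_q\neq 0$, this forces $a_q = 0$, establishing independence of the classes $[J^{k+1}_p]$.

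The main obstacle I expect is the identification of the preferred component's parameterized invariant with the family diffeomorphism invariant of $\alpha^k\cs_{\Sigma_1}1_{X_p\cs_{\Sigma_2}V}$—that is, verifying that the family submanifold sum along $Y$ with $-V$ genuinely cancels the buffer and the stabilizing $U$ at the level of the \emph{parameterized} total space, not just fiberwise. One must track carefully how the clutching data interacts with the cut-and-reglue of the submanifold sum, using both parts of Lemma~\ref{fam} to move the family automorphisms into the standard position, and confirm that the discarded $U$-component contributes nothing to $\mathbb{I}_{\cc_q}$. A secondary subtlety is the additivity of $\mathbb{I}_{\cc_q}$ needed to pass from ``separates the classes'' to ``independent,'' which relies on the gluing formula structure of the gauge-theoretic family invariant.
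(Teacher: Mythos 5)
Your proposal follows essentially the same route as the paper's proof: identify the preferred component $\mathcal{Z}[\![J^{k+1}_p]\!]$ with the clutched family $\mathcal{Z}[\alpha^k\cs_{\Sigma_1}1_{X_p\cs_{\Sigma_2}V}]$ via the family isomorphisms of Lemma~\ref{fam} and the diffeomorphism $1_{Z_k}\cs_{\Sigma_1}\varphi_p$, evaluate $I^{H_{k+1}\emb}_{\cc_q}(J^{k+1}_p)=\gamma_p'\delta_{pq}$, and conclude independence from the diagonal nonzero matrix of invariants. The concerns you flag (naturality of the identification, homomorphism property of the invariant) are exactly the points the paper relies on, so this matches the intended argument.
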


\begin{proof}
Notice that $\Phi^k\colon \mathcal{Z}[\alpha^k\cs_{\Sigma_1}1_{X_0\cs_{\Sigma_2}\widehat{V}}]\to S^{k+1}\times Z_{k}')$
satisfies 
\[\Phi^k\underline{j_p}([\epsilon,v],y) = [\epsilon,v,J^{k+1}_p([\epsilon,v])(y)].
\]
The diffeomorphism $1_{Z_k}\cs_{\Sigma_1}\varphi_p\colon Z_k\cs_{\Sigma_1}X_p\cs_{\Sigma_2}\widehat{V} \to Z_k\cs_{\Sigma_1}X_0\cs_{\Sigma_2}\widehat{V}$ combined with  Lemma~\ref{fam} then gives a family isomorphism
\[
\xi\colon \mathcal{Z}[\alpha^k\cs_{\Sigma_1}1_{X_0\cs_{\Sigma_2}\widehat{V}}] \to \mathcal{Z}[\alpha^k\cs_{\Sigma_1}1_{X_p\cs_{\Sigma_2}\widehat{V}}],
\]
so that $\alpha^k\cs_{\Sigma_1}1_{X_0\cs_{\Sigma_2}\widehat{V}}j_p = \alpha^k\cs_{\Sigma_1}1_{X_p\cs_{\Sigma_2}\widehat{V}}\iota$.
  It follows that
\begin{align*}
\mathcal{Z}[\![J^{k+1}_p]\!] &= \text{preferred component}\left((S^{k+1}\times Z_{k}'))\cs_{J^{k+1}_p}(S^{k+1}\times -V)\right)\\
&\cong \text{preferred component}\left(\mathcal{Z}[\alpha^k\cs_{\Sigma_1}1_{X_0\cs_{\Sigma_2}\widehat{V}}]\cs_{\underline{j_p}}(S^{k+1}\times -V)\right) \\
&\cong \text{preferred component}\left(\mathcal{Z}[\alpha^k\cs_{\Sigma_1}1_{X_p\cs_{\Sigma_2}\widehat{V}}]\cs_{\underline{\iota}}(S^{k+1}\times -V)\right) \\
&\cong \mathcal{Z}[\alpha^k\cs_{\Sigma_1}1_{X_p\cs_{\Sigma_2}{V}}]
\end{align*}
Thus,
\begin{align*}
I^{H_{k+1}\emb}_{\cc_q}(J^{k+1}_p) &= \mathbb{I}_{\cc_q}(\mathcal{Z}[\![J^{k+1}_p]\!]) \\
&= \mathbb{I}_{\cc_q}(\mathcal{Z}[\alpha^k\cs_{\Sigma_1}1_{X_p\cs_{\Sigma_2}{V}}]) \\
&= I^{H_{k}\diff}_{\cc_q}(\alpha^k\cs_{\Sigma_1}1_{X_p\cs_{\Sigma_2}{V}}) = \gamma_p'\delta_{pq}.
\end{align*}
This is sufficient to show that the families of embeddings are independent in the homotopy group.

We can slightly recast the invariant $I^{H_{k+1}\emb}_{}$. By making $\cc_q$ an argument we obtain
\[
I^{H_{k+1}\emb}_{(\cdot)}\colon H_{k+1}(\emb(Y,Z_{k}'))) \to \text{Maps}(\text{Set of }\cc_q\text{ values},\Z).
\]
Thus, $I^{H_{k+1}\emb}_{(\cdot)}\left(\ker\left[\pi_{k+1}(\emb(Y,Z_{k}')))\to \pi_{k+1}(\emb^{\Top}(Y,Z_{k}')))\right] \right)$
is a free group of rank equal to the number of initial exotic manifolds $X_p$. As it is free there is a right inverse
$s$ to $I^{H_{k+1}\emb}_{(\cdot)}$ from this group to the kernel 
giving the required summand. The same argument generates the summand on the kernel at the level of homology.
\end{proof}

\section{Specific choices}\label{S:blocks}
To make the outline in the prior section into a proof, one must make specific choices for the manifolds, diffeomorphisms, and invariants. We will see that there are many different choices for manifolds $X_p$, $U$, and $V$, and the diffeomorphism $f:U \to U$ that can be used as the basis of our constructions. In this section, we give some specific choices and specify the manifolds $Z$ appearing in the statement of Theorem~\ref{T:B}.  All of them are built in some fashion out of the elliptic surface $E(2)$, which has $q_{E(2),\cc,\beta} = -1$ with the homology orientation and class $\cc$ described in \cite{donaldson-kronheimer}. It also has $\text{SW}(E(2),0) = 1$. 

The surface $E(2)$ has three embedded copies~\cite{gompf-mrowka} of the Gompf nucleus~\cite{gompf:nuclei}, labeled $N^{(1)}$, $N^{(2)}$ and $N^{(3)}$, whose fibers are denoted $T^{(1)}$, $T^{(2)}$ and $T^{(3)}$. (We retain the convention from~\cite{auckly-ruberman:diffym} of denoting various specific manifolds with bold letters; these are not quite the same as the manifolds appearing in this paper, which are in ordinary font.) Set $\xx = E(2)\cs\cptwobar$. 
We also define $\yy$ by a cork twist  on $\xx$ using the cork discussed below in Example~\ref{E:selman}; in particular, it decomposes as a connected sum of copies of $\cptwo$ and $\cptwobar$ and hence has trivial gauge-theoretic invariants. This is the original example of an effective cork discovered by Akbulut. It is important to notice that the cork is disjoint from the $N^{(1)}$ nucleus; see~\cite[Figure 7]{auckly-kim-melvin-ruberman:isotopy}.

By Freedman's work there is a homeomorphism $\psi:\xx\to\yy$ relative to $N^{(1)}$. Furthermore, if we set $\mm = \cptwo \cs 2 \cptwobar \cong S^2 \times S^2 \cs \cptwobar$ there is a diffeomorphism $\varphi:\xx\cs\mm \to \yy\cs\mm$ relative to $N^{(1)}$ inducing the same map on homology. We use $X'=\xx$, $X=\yy$, and $Z_0=\yy\cs\mm$ in the construction outlined in section~\ref{s:recursion}. 

Corresponding to the three summands in the stabilizing manifold $U = \mm$, there are $2$-spheres $H$, $E_1$, and $E_2$ of self-intersection $1$, $-1$, and $-1$, respectively. Any sphere $S$ of self-intersection $\pm 1$ in a $4$-manifold determines a reflection $\rho^S$, and we define
\[
f = \rho^{E_1}\rho^{H+E_1 + E_2}.
\]
As described in equations~\eqref{callmeal} and~\eqref{callmealk} in Section~\ref{S:ss}, such a diffeomorphism gives rise to a diffeomorphism $\alpha^0:Z_0\to Z_0$, and by a recursive process, to spherical families $\alpha^k$ of diffeomorphisms on the related manifolds $Z_k$. This is exactly the rank one case of the construction of exotic families of diffeomorphisms presented in \cite{auckly-ruberman:diffym}. In addition, the commutator construction gives topological contractions and smooth stabilized contractions. The formula for each of these is
\begin{equation}\label{E:r=1}
\begin{aligned}
\alpha^{k+1}& = [F^{k},1_{\zz_{k}}\cs_T\alpha]\\
G^{k+1}& = [F^{k},1_{\zz_{k}}\cs_TG]\\
F^{k+1}& = [F^{k},1_{\zz_{k}}\cs_TF].
\end{aligned}
\end{equation}
We will only need the $\alpha^k$ and $F^k$ in this paper.

The basic idea to create families of embeddings is to fiber sum a family of manifolds constructed via the clutching construction with the $\alpha^k$ families of diffeomorphisms with the collections of exotic embeddings described in section~\ref{S:ss}. Schematically we are just gluing a copy of Figure~\ref{F:subsum} onto the end of a copy of Figure~\ref{F:cartoon}.

The buffer manifold $V$ is defined to be the blowup $E(2)\cs \cptwobar$ (this is the same as $\xx$ but we are using it differently). We take $\Sigma_2 = T^{(2)}$ the fiber in the second nucleus.
Any separating $3$-manifold $Y$ that embeds in $V$ disjoint from a fiber will satisfy the conclusions of the theorem and have large families of embeddings in the manifolds $Z$ constructed below. 
Some popular choices for $Y$ might be $S^3$ (as the boundary of a ball), the Brieskorn sphere $\Sigma(2,3,11)$ (boundary of a nucleus), the Brieskorn sphere $\Sigma(2,3,7)$ (boundary of a Milnor fiber~\cite{fs:237}), or the boundary of the Akbulut-Mazur cork~\cite{akbulut:contractible}. Note that for this last choice there are two distinct embeddings of $Y$, differing by reparameterization using the cork twist. In any of these cases let $V_0$ be the component of $V\setminus Y$ that contains $\Sigma_2 = T^{(2)}$ and let $V_1$ be the other component.

To construct the manifolds $X_p$ we will use log transforms and fiber sums. Let $E(2)_{p;q;r}$ be the result of performing log transforms of multiplicity $p, q$, and $r$ on the fibers in the $N^{(i)}$ nuclei for $i=1,2,3$, where by convention `$\infty$' means that no surgery has been done. $E(2)_{p;q;r}$  inherits a homology orientation from $E(2)$. 

\begin{remark}
 We could define $X_p$ to be $E(2)_{\infty;\infty;2p+1}$, and take $I_{\cc_q}$ to be the Seiberg-Witten invariants corresponding to $2q[F_p]$, where $[F_p]$ is Poincar\'e dual to the multiple fiber in $X_p$. There are suitable family versions of this invariant~\cite{konno:classes}. In a forthcoming paper we will show how to compute the family Seiberg-Witten invariants to make this whole story work. This paper is not yet available, so we will describe a more complicated construction that is based on our computation~\cite{auckly-ruberman:diffym} of the relevant family Donaldson invariants.  

 While the family version will have to wait for the computation of the family Seiberg-Witten invariants, we note that these manifolds are sufficient to establish the existence of an infinite collection of exotic codimension one embeddings as described in section~\ref{S:ss} via Donaldson invariants. Indeed, by \cite[Theorem 3.1]{Fintushel-Stern:Simple type} and~\cite[Theorem 3.1]{gompf-mrowka},
\[ 
q_{X_p,\cc,\beta} = \sum_{j=-p}^p (-1)^{\frac12(\cc^2+2jc[f^{(3)}]+c[f^{(3)}])}. 
\]
\end{remark}

The construction of a collection of manifolds $X_p$ suitable for use with family Donaldson invariants generalizes $\xx = E(2)\cs\cptwobar$ to
\[
\xx^r  = \xx\cs_T E(2)^{(\cs_T)^{r+1}}\!\!\cs\cptwobar.
\]
The torus sum in this expression is done along the fibers labeled $T^{(1)}$ in each copy of $E(2)$. It is useful to notice that all of these are homeomorphic relative to $N^{(1)}$ to the manifold $\yy^r$ obtained by performing a cork twist on the cork in the zeroth copy of $\xx$. Finally, we define the manifolds $\xx^r_p$ by replacing the $p^{th}$ copy of $E(2)$ in $\xx^r$ with $E(2)_{\infty;2;2}$. We will take $X_p = \xx^r_p$ for our proof of Theorem~\ref{T:B}. We will take $\Sigma_1$ and $\Sigma_2$ to be parallel copies of the fiber $T^{(1)}$ in the first nucleus.

We now discuss the invariants that we use to distinguish the various families that we construct.  The same circle of ideas provides invariants for families of diffeomorphisms and for families of embeddings. We start with a review of the simplest sort of Donaldson invariants and then move on the family version, for the applications to families of diffeomorphisms and embeddings.

For a $4$-manifold $X$ with $b^2_+(X) > 1$, set 
\[
\ell_X  =  -\frac34 \left(\chi(X)+\sigma(X)\right), 
\]
and define $C_X$ to be the set of $\eta \in H^2(X;\zz_2)$ such that 
\begin{enumerate}
\item[(i)] $\eta$ lifts to an integral class $c \in H^2(X)$
\item[(ii)] For any integral lift $c$ of $\eta$ we have $c \cdot c \equiv \ell_X \pmod{4}$
\item[(iii)] $\eta \neq 0$.
\end{enumerate}
Further, we let $\Chat_X$ be the set of integral lifts $\cc$ of elements in $C_X$ modulo the equivalence $\cc \sim \cc'$ if $\cc \equiv \cc' \pmod2$ and $(\frac12(\cc-\cc'))^2  \equiv 0 \pmod2$. The set $C_X$ parameterizes a set of potential second Stiefel-Whitney classes of SO$(3)$ bundles over $X$. Adding the requirement that the virtual dimension of the ASD moduli space be zero then ensures that each element of $C_X$ corresponds to a unique bundle. The $\Chat_X$ refines the choice of bundle by an equivalence class of $\spinc$-lift up to an equivalence relation. In addition, we take a homology orientation, i.e., $\beta\in\Bigwedge^{top}(H^0\oplus H^2_+(X))$. The choice of $\cc$ and $\beta$ determine a specific SO$(3)$ bundle and the data to define an orientation on the corresponding ASD moduli space. Furthermore, for generic metrics these moduli spaces are a finite set of oriented points and the degree zero Donaldson invariant $q_{\cc,\beta}(X)$ is just the signed count of these points. These are the invariants in item (1) of the recipe from section~\ref{s:recursion}

For future reference, we note a special case of Donaldson's blowup formula~\cite{donaldson:polynomial}. If $\pi\colon X\cs\cptwobar\to X$ is the natural projection and  $\cc\in\Chat_X$, then $q_{X\cs\cptwobar,\pi^*\cc} = q_{X,\cc}$.
We build many manifolds via submanifold sums, all of which are sums along tori, so we need to know how to compute invariants in this situation. Here is a specific fiber sum formula that works in this setting. Let $X$ and $Y$ be smooth $4$-manifolds, each with a preferred embedded torus of square zero, each denoted $T$. Let $\cc_X\in\Chat_X$ and  $\cc_Y\in\Chat_Y$ satisfy $\cc_X[T] \equiv \cc_Y[T] \equiv 1\pmod{2}$. Further suppose that the restriction of  $\cc_{XY}\in\Chat_{X\cs_TY}$ to $X\setminus T$ (respectively $Y\setminus T$) agrees with the restriction of $\cc_X$ (respectively $\cc_Y$). Then by \cite[Theorem 2.2]{morgan-szabo}
\[
q_{{X\cs_TY},\cc_{XY}} = q_{X,{\cc_X}}q_{Y,{\cc_Y}}.
\]

We also need a computation of the Donaldson invariants of the $\xx^r_p$. We identify the cohomology groups $H^*(\xx^r)$ and $H^*(\yy^r)$ via a homeomorphism $\psi$. To specify the cohomology classes that we need for the Donaldson invariant calculation, we need a little notation. In the $i^{th}$ copy of $E(2)$, let the homology classes of the sections (lying in the $3$ nuclei) be denoted by $\sigma^{(i,a)}$ and the homology classes of the fibers be denoted by $f^{(i,a)}$.
Let $\Upsilon^\xx\colon\xx\setminus T^{(1)} \to \xx^r$ and $\Upsilon^{(i)}\colon E(2)^{(i)}\setminus f^{(i,1)} \to \xx^r$ be the inclusion maps. Let $\cc_q^\xx$ be cohomology classes in $H^2(\xx^r)$ so that $(\Upsilon^\xx)^*\cc_q^\xx$ is the image of the Poincar\'e dual to the section of $\xx$ in $H^2(\xx\setminus F)$, $(\Upsilon^{(i)})^*\cc_q^\xx$ is the image of the Poincar\'e dual to $\sigma^{(i,1)}+\sigma^{(i,2)}+\sigma^{(i,3)}+f^{(i,1)}-3f^{(i,2)}-2f^{(i,3)}$ in $H^2(E(2)^{(i)}\setminus f^{(i,1)})$ for $i\neq q$ and is the image of the Poincar\'e dual to $\sigma^{(i,1)}$ for $i=q$. Set $\cc_q = \psi^*\cc_q^\xx$. It is an elementary calculation to verify that all of the $\cc$-classes that we have defined live in the correct $\Chat$ sets. Furthermore, the Donaldson invariants satisfy the assumption that $(q_{\xx^r_p,\cc_q})$ is $4(-1)^r$ times the identity matrix \cite{auckly-ruberman:diffym}.

In~\cite{auckly-ruberman:diffym} the degree zero Donaldson invariant is extended to an invariant for families of diffeomorphisms.  Assume $b^2_+(Z)$ is at least $k+3$ and that $b^2_+(Z)$ and $k$ have the same parity. Set 
\[\ell_Z^k = -\frac34 \left(\chi(Z)+\sigma(Z)\right) + \frac12(k + 1) = -\frac12(3b_2^+(Z) - k + 2)  \]
and define $C_Z^k$ to be the set of $\eta \in H^2(Z;\zz_2)$ such that 
\begin{enumerate}
\item[(i)] $\eta$ lifts to an integral class $c \in H^2(Z)$
\item[(ii)] For any integral lift $c$ of $\eta$ we have $c \cdot c \equiv \ell_{Z}^k \pmod{4}$
\item[(iii)] $\eta \neq 0$.
\end{enumerate}
Further, we let $\Chat_{Z}^k$ be the set of integral lifts $\cc$ of elements in $C_{Z}^k$ modulo the equivalence $\cc \sim \cc'$ if $\cc \equiv \cc' \pmod2$ and $(\frac12(\cc-\cc'))^2  \equiv 0 \pmod2$.

Let $\alpha =\sum a_i\sigma_i$ be a smooth singular cycle representing a class in $H_k(\diff(Z))$. For a generic metric $g_*$ on $Z$ one gets a smooth singular cycle in $C_k(\text{Met}(Z))$ given by $\alpha^*g_* =\sum a_i\sigma_i^*g_*$. As $H_k(\text{Met}(Z)) = 0$, this is the boundary of a $(k+1)$-chain $\beta = \sum b_j\tau_j$. Informally, we have an invariant
\[
D^{H_k\diff}_{\cc,\beta}\colon  H^k(\diff(Z))\to \Z,
\]
given by
\[D^{H_k\diff}_{\cc,\beta}(\alpha) = \sum b_j\#\mathcal{M}_{\cc,\beta}(\{\tau_j\}).\]
To make the definition of this invariant rigorous, a variation of smooth singular chains that incorporates suitable transversality may be used. This is the approach described in~\cite{auckly-ruberman:diffym}. 
Composition with the Hurewicz homomorphism $\pi_k(\diff(Z))\to H_k(\diff(Z))$ produces an invariant for the homotopy groups of the diffeomorphism group. 

In~\cite{konno:classes} Konno defined a version of the degree zero Donaldson invariants for families of manifolds. His definition yields a cohomology class on $B\diff(Z)$ (or precisely on the classifying space for a subgroup of $\diff(Z)$ that preserves bundle data and homology orientations). He also describes it as a characteristic class for bundles of $4$-manifolds.  Konno's invariant is based on a CW representation of homology, and uses a family version of fundamental cycles. Given $\cc\in \Chat^k_Z$ and a homology orientation $\beta$ on $Z$, the version of his invariant that we use for a $(k+1)$-dimensional family $Z\hookrightarrow\mathcal{Z}\to B$ is denoted by $\mathbb{D}_{\cc,\beta}(\mathcal{Z})$. Essentially, it assigns to each $(k+1)$-cell a count of
ASD connections in a parameterized moduli space. Konno's invariants play the role of $\mathbb{I}_{\cc_q}(\mathcal{Z}[\![J]\!])$ from section~\ref{s:recursion}. The 
invariants playing the role of $I^{H_{k+1}\emb}_{\cc_q}$ in item (3) of the recipe from section~\ref{s:recursion} are given by
\[
D^{H_{k+1}\emb}_{\cc,\beta}(J) = \mathbb{D}_{\cc,\beta}(\mathcal{Z}[\![J]\!]).
\]
The following proposition establishes one relation between the family and diffeomorphism invariants.
\begin{proposition}\label{fam-diff}
    For $\alpha\in\pi_k(\diff(Z))$ one has
    \[
D^{H_k\diff}_{\cc,\beta}(\alpha) = \mathbb{D}_{\cc,\beta}(\mathcal{Z}[\alpha]).
    \]
\end{proposition}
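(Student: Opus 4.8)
The plan is to compute both invariants from the same geometric data on the clutched family $\mathcal{Z}[\alpha]\to S^{k+1}$ and to observe that the two signed counts of $\ASD$ connections coincide. Both $D^{H_k\diff}_{\cc,\beta}$ and $\mathbb{D}_{\cc,\beta}$ are, at bottom, counts of points in a parameterized $\ASD$ moduli space for the $\SO(3)$ bundle determined by $\cc$, where the normalization of $\ell_Z^k$ arranges the fiberwise formal dimension to be $-(k+1)$, so that the moduli space becomes $0$-dimensional once $(k+1)$ metric parameters are introduced (both are defined under the stated hypotheses on $b^2_+(Z)$ and its parity). Both invariants are independent of their auxiliary choices---the generic fiberwise metric, the bounding chain, and, for Konno's invariant, the CW/cell structure on the base---as established in~\cite{auckly-ruberman:diffym} and~\cite{konno:classes}. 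Consequently it suffices to evaluate them on one convenient choice of fiberwise metric, which I now describe.

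Write $S^{k+1}=D^{k+1}_+\cup_{S^k}D^{k+1}_-$ using the two hemispheres $\{\pm1\}\times D^{k+1}$ of the clutching construction, so that over the equator $S^k$ the two trivializations are identified by $\alpha(v)$. Fix a generic metric $g_*$ on $Z$, and over $D^{k+1}_+$ take the constant fiberwise metric $g_*$. Under the clutching identification this constant metric agrees over the equator (up to the clutching convention) with the family of pullback metrics $v\mapsto\alpha(v)^*g_*$ that defines the cycle $\alpha^*g_*$ in the definition of $D^{H_k\diff}_{\cc,\beta}$. Because $\met(Z)$ is contractible this family extends over $D^{k+1}_-$, and such an extension is precisely the bounding chain $\beta=\sum b_j\tau_j$ of the diffeomorphism invariant, now viewed as a family of fiberwise metrics over the lower hemisphere. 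Since $g_*$ is generic and the fiberwise formal dimension is $-(k+1)<0$, the parameterized moduli space is empty over $D^{k+1}_+$; hence the entire parameterized moduli space of $\mathcal{Z}[\alpha]$, computed with this fiberwise metric, is concentrated over $D^{k+1}_-$.

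With this choice the two computations agree set-theoretically: Konno's count over $S^{k+1}$ reduces to the count over $D^{k+1}_-$, and the latter, after translating the disk/cell model of $D^{k+1}_-$ into a singular chain bounding $\alpha^*g_*$, is exactly $\sum b_j\#\mathcal{M}_{\cc,\beta}(\{\tau_j\})$. The remaining point is to match orientations, and I expect this to be the main obstacle: one must check that the homology orientation $\beta$ induces the same orientation on the parameterized moduli space in both formalisms, so that the two \emph{signed} counts agree. Both invariants are defined only for families preserving the bundle data and the homology orientation, so $\alpha$ acts trivially on the relevant determinant line and the orientation of the $\ASD$ moduli space propagates consistently around the equatorial gluing by naturality of the orientation under the $\diff(Z)$-action. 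Unwinding Konno's CW fundamental-cycle orientation convention against the orientation of the singular chain $\beta$ and against the clutching gluing, and verifying that they agree, is then a sign-bookkeeping argument; granting it, the two invariants are equal.
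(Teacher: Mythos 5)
The paper does not actually prove this proposition: its ``proof'' is the remark that follows the statement, which asserts that the claim is intuitively clear (a singular chain may be represented by a map from a CW complex, and given transversality the virtual neighborhood is trivial so the two definitions agree) and defers the details to Drouin's paper. Your proposal is therefore necessarily more explicit than what the paper offers, and it fleshes out exactly the intuition the remark gestures at: both invariants are signed counts of a parameterized $\ASD$ moduli space, both are independent of their auxiliary choices, so one may compute them with the same fiberwise metric on $\mathcal{Z}[\alpha]$ --- constant equal to a generic $g_*$ over $D^{k+1}_+$ (where the moduli space is empty because the fiberwise formal dimension is negative), agreeing over the equator with the family $v\mapsto\alpha(v)^*g_*$, and extended over $D^{k+1}_-$ by a nullhomotopy in the contractible space $\met(Z)$, which is precisely a bounding chain for the cycle $\alpha^*g_*$ in the definition of $D^{H_k\diff}_{\cc,\beta}$. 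This is the standard mechanism relating the Ruberman-type invariant of a spherical family of diffeomorphisms to the families invariant of the clutched bundle, and the geometric core is sound. Two caveats. First, whether the constant metric in the $+$ chart corresponds over the equator to $\alpha(v)^*g_*$ or to $\bigl(\alpha(v)^{-1}\bigr)^*g_*$ depends on the clutching convention; getting this backwards replaces $\alpha$ by $\alpha^{-1}$ and hence flips the sign, so it should be pinned down rather than waved at. Second, as you yourself flag, the comparison of Konno's CW fundamental-cycle orientation with the orientation conventions on the singular bounding chain is left as unexecuted ``sign-bookkeeping''; this is the one genuinely delicate step, and it is also precisely the step the paper declines to address (beyond citing Drouin). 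Since the proposition asserts equality on the nose rather than up to sign, a complete proof must carry this out, though for the applications in the paper --- nonvanishing and linear independence of the classes $J_p^{k+1}$ --- a global sign ambiguity would be harmless.
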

\begin{remark}
    Intuitively, this proposition is clear. In simple situations one may represent a singular chain by a map from a CW complex into the space. Given transversality, the virtual neighborhood is trivial and the two definitions will agree. Drouin includes a proof of this proposition in~\cite{drouin:embeddings}.
\end{remark}

To compute the invariants $D^{H_{k+1}\emb}_{\cc,\beta}(J)$, we combine Proposition~\ref{fam-diff} with the homomorphism properties of $D^{H_k\diff}$ and following result from \cite{auckly-ruberman:diffym}.
\begin{theorem}[Anti-holomorphic blow-up]\label{anti-hol} Let $\beta\colon S^k\to\diff(Z,D)$ be a family so that $\beta\cs 1_{\mm}$ smoothly contracts via a contraction $F$.
We then have
\[
\lvert D_{\cc+\nnu}^{\pi_{k+1}}(F(1_Z\cs \bff)(F)^{-1})\rvert = 2 \lvert D_\cc^{\pi_k}(\beta)\rvert.
\] 
\end{theorem}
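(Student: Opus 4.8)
The plan is to reduce both sides to Konno's family Donaldson invariant $\mathbb{D}$ and then to exhibit the $(k+1)$-parameter count as the product of the $k$-parameter count for $\beta$ with a local contribution of exactly $2$ coming from the $\mm$-summand. By Proposition~\ref{fam-diff} together with the clutching construction, $D^{\pi_k}_{\cc}(\beta) = \mathbb{D}_{\cc}(\calz[\beta])$ and, writing $\alpha^{k+1}$ for the commutator family $F(1_Z\cs\bff)F^{-1}(1_Z\cs\bff)^{-1}$ of Equation~\eqref{callmealk}, $D^{\pi_{k+1}}_{\cc+\nnu}\!\big(F(1_Z\cs\bff)F^{-1}\big) = \mathbb{D}_{\cc+\nnu}(\calz[\alpha^{k+1}])$, with the fixed homology orientation suppressed. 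So the whole assertion becomes the comparison of two parameterized ASD moduli-space counts, one over $S^{k+1}$ and one over $S^{k}$. The hypothesis $b^2_+(Z)\ge k+3$ with matching parity is exactly what makes both parameterized moduli spaces $0$-dimensional, so that these counts are defined.

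Next I would unpack the geometry of the bundle $\calz[\alpha^{k+1}]\to S^{k+1}$. Since $F$ contracts $\beta\cs 1_{\mm}$, Lemma~\ref{fam}(a) trivializes $\calz[\beta\cs 1_{\mm}]$, and the extra suspension coordinate of $S^{k+1}$ records precisely the loop of diffeomorphisms obtained by conjugating the reflection $1_Z\cs\bff$ by $F$; the base $S^{k}$ continues to carry the $Z$-family exactly as in $\calz[\beta]$. Because $\bff=\rho^{E_1}\rho^{H+E_1+E_2}$ is supported on the summand $\mm=\cptwo\cs 2\cptwobar$, I would stretch the fiberwise connected-sum neck $S^3$ separating $Z$ from $\mm$ in every fiber. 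Excision then splits the parameterized moduli space into a $Z$-part, carrying the class $\cc$ and varying over $S^{k}$ as in $\calz[\beta]$, glued to an $\mm$-part carrying the class $\nnu$ supported on the exceptional spheres and varying over the suspension circle through $\bff$. The upshot I am aiming for is a product formula identifying the $S^{k+1}$ count with (the $S^{k}$ count of the $Z$-factor) times (the $S^1$ count of the reflection on $\mm$).

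The core computation is then the local $\mm$-factor, which should reproduce the base ($k=0$) anti-holomorphic contribution. Here I would use the blow-up formula $q_{X\cs\cptwobar,\pi^*\cc}=q_{X,\cc}$ and the fiber-sum formula recalled above to reduce the $\mm$-count to a one-parameter wall-crossing count measuring how $\bff$ moves the relevant reducible/ASD locus; the class $\nnu$ is chosen so the associated $\SO(3)$ bundle on $\mm$ has a $0$-dimensional $S^1$-family moduli space, and the signed count of the reflection's action on it is $\pm 2$. Multiplying by the $Z$-factor count $\mathbb{D}_{\cc}(\calz[\beta])$ yields $\lvert\mathbb{D}_{\cc+\nnu}(\calz[\alpha^{k+1}])\rvert = 2\,\lvert\mathbb{D}_{\cc}(\calz[\beta])\rvert$, which is the claimed identity. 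The main obstacle is the parameterized gluing step: establishing family transversality over $S^{k+1}$, ruling out stray reducibles and bubbling at the ends of the $0$-dimensional parameterized moduli space, and proving that neck-stretching produces a genuine product decomposition so that the $\mm$-contribution factors out cleanly as the constant $2$. Everything else is bookkeeping with the clutching construction and the standard blow-up and fiber-sum formulas.
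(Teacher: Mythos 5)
You should first be aware that this paper does not actually prove Theorem~\ref{anti-hol}: it is introduced as ``the following result from~\cite{auckly-ruberman:diffym},'' a companion manuscript in preparation, so there is no in-paper argument to measure your proposal against. Judged on its own terms, your outline follows the strategy one would expect from~\cite{ruberman:isotopy} and Section~\ref{S:blocks}: pass to Konno's family invariant via Proposition~\ref{fam-diff} and the clutching construction, stretch the neck along the $S^3$ separating $Z$ from $\mm$ in every fiber, and localize the computation to a one-parameter count on the $\cptwo\cs 2\cptwobar$ summand where $\bff=\rho^{E_1}\rho^{H+E_1+E_2}$ is supported. That is the right skeleton.

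As a proof, however, it has two genuine gaps, both of which you flag but neither of which is routine. First, the factor of $2$ --- the entire content of the theorem --- is asserted rather than derived. The expected mechanism is that the two integral lifts $\cc\pm\nnu$ of the relevant $w_2$ are interchanged by $\bff$, so the parameterized moduli space over the extra suspension coordinate picks up the count for the $Z$-side twice, with signs governed by the effect of the reflection on orientations of the moduli space; your sketch never specifies what $\nnu$ is, why the $\mm$-side contribution over the circle is zero-dimensional, or why the two contributions add rather than cancel. Second, the ``product decomposition'' you want from neck-stretching is not a formal consequence of excision in the family setting: one needs a parameterized gluing theorem with transversality over all of $S^{k+1}$, control of reducibles on the neck, and compatibility of family orientations --- exactly the technical content deferred to~\cite{auckly-ruberman:diffym}. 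A smaller point: $F(1_Z\cs\bff)F^{-1}$ does not restrict to the identity at the ends of the suspension interval (its values there are $1_Z\cs\bff$ and its conjugate by $\beta\cs 1_{\mm}$), so your silent replacement of it by the commutator family of Equation~\eqref{callmealk} needs justification before you can speak of a class in $\pi_{k+1}$ of the diffeomorphism group and invoke Proposition~\ref{fam-diff}.
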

Putting everything together this implies that there are constants $\gamma_p'$ so that
\[
D^{H_{k+1}\emb}_{\cc_q,\beta}(J_p^{k+1}) = \gamma_p'\delta_{pq}.
\]

\section{Proof of Theorem~\ref{T:B}}\label{fam-pr}
Sections~\ref{S:inv} and~\ref{S:blocks} nearly complete the proof of Theorem~\ref{T:B}. The only missing piece is that one may take one target manifold $Z$ for which there are large rank summands in the kernels of the homology and homotopy groups for a range of indices. 

As we have defined them, the manifolds $Z_{k}'$ grow as $k$ grows. Picking an upper bound $n$ we can pick a new family of manifolds $V_k$ so that 
\[
Z_{k}'\cs_{\Sigma_1}V_k \cong Z_n''
\]
independent of $k$ for $k\le n$. Provided this only changes the invariants by a non-zero factor, the same argument will work. The reason this construction only addresses homotopy or homology groups in degree of a fixed parity is that the invariant $D^{H_{k+1}\emb}_{-,\beta}$ can only be non-zero when $b^2_+(Z) \equiv k+1 \ (\text{mod} \ 2)$. This argument is written out in more detail in~\cite{auckly-ruberman:diffym}.\hfill\qedsymbol
\begin{remark}
    The reason we just obtain finite-rank summands is that $\Chat^{k}_Z$ is finite. The Seiberg-Witten version of the family invariants may be parameterized by the (infinite) collection of $\spinc$ structures. Thus, in a forthcoming paper
    we describe how to compute the relevant Seiberg-Witten invariants that establish the existence of infinite-rank summands of homotopy classes of embeddings into a fixed $4$-manifold. It is not unreasonable to imagine that a family version of the Bauer-Furuta invariants could establish that the kernels of all of the homotopy and homology groups in a range contain infinite-rank summands. 
\end{remark}

To construct exotic embeddings of a $3$-manifold $Y$ we used an embedding of $Y$ into a $4$-manifold $V$ with non-trivial Donaldson invariants. This leads to the following question (see also~\cite{etnyre-min-mukherjee:boundaries}).

\begin{question}
    Does every $3$-manifold embed into some $4$-manifold with non-trivial Donaldson invariants? Non-trivial Seiberg-Witten invariants? Non-trivial Bauer-Furuta invariants? As the referee points out, we should require that the $3$-manifold separates to avoid a trivial answer such as $Y \hookrightarrow (S^1 \times Y) \cs X$ where $X$ has a non-trivial gauge-theoretic invariant. 
 \end{question}
It is known~\cite{daemi-lidman-miller:nosymp} that there are $3$-manifolds that do not embed smoothly in any symplectic $4$-manifold, blocking an easy route to a positive answer to this question.

\section{Families of submanifolds}
The previous section gave explicit families of \emph{embeddings} of submanifolds; by definition this includes the parameterization. Our goal is to show that in some cases, these families of embeddings give rise to families of \emph{submanifolds}. This means that we need to understand any possible dependence of the invariants from the choice of parameterization, or in other words, of automorphisms of the submanifold. In the following discussion, we formalize the family version of automorphisms of the submanifold.  Let $Y\hookrightarrow \mathcal{Y}\to \Xi$ be a family  of manifolds and $Z\hookrightarrow \mathcal{Z}\to \Xi$ be a family of manifolds with $\dim(Z) \geq \dim(Y)$. 
Let $\text{Aut}(\mathcal{Y})$ be the group of bundle automorphisms of $\mathcal{Y}$. 
    \begin{definition}
        A family of embeddings is a bundle map $J\colon \mathcal{Y}\to\mathcal{Z}$. Define an equivalence relation on families of embeddings by $J\sim J'$ if there is $\xi\in \text{Aut}(\mathcal{Y})$ so that $J\xi = J'$. A family of submanifolds is an equivalence class of a family of embeddings.
    \end{definition}

    \begin{remark}
        We denote the space of family embeddings by $\mathcal{E}\text{mb}(\mathcal{Y},\mathcal{Z})$. In this case the family of submanifolds with given bundles of manifolds is identified with 
        \[\mathcal{E}\text{mb}(\mathcal{Y},\mathcal{Z})/\text{Aut}(\mathcal{Y}).\] We call a family of embeddings representing a given family of submanifolds a \emph{marking}.
    \end{remark}

The first invariant of a family of submanifolds is the underlying bundle $\mathcal{Y}$. Such bundles are classified by homotopy classes of maps to $\text{BDiff}(Y)$. The identification $\Omega\text{BDiff}(Y) \simeq \diff(Y)$ provides a good understanding of the homotopy type of $\text{BDiff}(Y)$. For example
if $Y$ is a hyperbolic $3$-manifold, $\diff(Y)$ is homotopy equivalent to $\text{Isom}(Y)$, so $\text{BDiff}(Y)$ is a $K(\text{Isom}(Y),1)$.  It would be appropriate to restrict to orientation-preserving diffeomorphisms and isometries to study oriented families of submanifolds.

Now turn to the question of distinguishing families of submanifolds with the same underlying bundle of spaces. For simplicity assume this bundle is trivial, so $\Xi\times Y$. The basic invariant for 
families of embeddings was to consider a family of spaces constructed via family submanifold sum, $\mathcal{Z}\cs_J\mathcal{V}$. The trouble with this approach for families of submanifolds is that the isomorphism class of the family of spaces may depend upon the representative of the family of submanifolds. One solution to this issue is to take the families associated to all markings of the submanifold. Since markings that differ by elements of $\text{Aut}(\mathcal{Y})_0$ give rise to the same families of maps, a better option is to define a homotopy marking as an equivalence class of marking up to $\text{Aut}(\mathcal{Y})_0$. The natural invariant of a family of submanifolds is the multiset (set with multiplicities) of all families of spaces obtained by taking the submanifold sum over all homotopy markings.  If that multiset is finite, then one can simplify further by taking the sum of invariants over the multiset.

A similar idea works when considering higher-dimensional families of submanifolds The higher homotopy groups are groups, so it is natural to look for an invariant in the form of a group homomorphism. If the diffeomorphism group of $Y$ has finitely many components (eg if $Y$ is hyperbolic) then sum of the invariants over all markings {will} 
give an integer-valued homomorphism from the homology groups of the space of submanifolds.

\subsection{Examples}\label{S:examples}

We now consider some examples to illustrate the difference between detecting families of embeddings and families of submanifolds. In this section we will return to the choice of the manifolds $X_p$ as the $(2p+1)$-log transforms of $E(2)$. This will simplify the discussion, and provide equally powerful results once combined with the forthcoming work on the Seiberg-Witten invariants. The first example illustrates that the invariants for $\pi_0(\emb)$ defined in section~\ref{S:ss} can in fact depend on the marking. We address this in two ways: one is by considering the multiset of invariants, and the other is by taking the sum over all markings (when the set of markings is finite). The second example is one where the diffeomorphism group of $Y$ is contractible, so there is only one marking. It appears as the second example because it is constructed by modifying the first example.  In fact, we describe a way to replace a given embedded submanifold by one with a contractible diffeomorphism group. 
\begin{example}\label{E:selman}
The first class of examples is constructed with the submanifold $Y= \partial W$ where $W$ is the Akbulut-Mazur cork. To start with, we will consider the dependence of our $\pi_0(\emb)$ invariants on the parameterization. 

To understand the diffeomorphism group of $Y$, we use the surgery picture in Figure~\ref{AM} in which all of the symmetries are visible.
\begin{figure}[ht]
\labellist
\small\hair 2pt
\pinlabel {$\tau$} [ ] at 42 6
\pinlabel{$\sigma$} [ ] at 95 55
\pinlabel {$+1$} [ ] at 5 95
\endlabellist
\centering
\includegraphics[scale = 1]{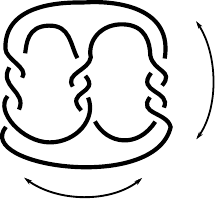}
  \caption{The boundary of the Akbulut-Mazur manifold}\label{AM}.
\end{figure}
Combining results about surgery on Montesinos knots shows that $Y$ is hyperbolic~\cite{meier:small,menasco-thistlethwaite:alternating,wu:montesinos}.
A calculation with SnapPy shows further that 
\[
\diff(Y) \simeq \text{Isom}(Y) =\langle \sigma, \tau \,|\, \sigma^2 = \tau^2 = [\sigma,\tau] = 1\rangle
\]
where $\sigma$ and $\tau$ are exhibited in Figure~\ref{AM}. This calculation, because it takes place on a closed manifold, is not completely rigorous. Forthcoming work of the second author with N. Hoffman gives a method to transform such calculations into a calculation of the symmetry group of an associated link, which can be done rigorously.

The diffeomorphism $\sigma$ extends over $W$, while the diffeomorphism $\tau$ is the cork twist. 
By~\cite{akbulut:contractible}, the manifold $W$ embeds into $V = E(2)\cs \cptwobar$ so that 
\[
\left(E(2)\cs\cptwobar\setminus\text{int}(W)\right)\cup_\tau W \cong 3\cptwo\cs 20\cptwobar
\]
from which it follows that $\tau$ does not extend over $W$.  
Moreover, it is known~\cite[Figure 9.4]{gompf-stipsicz:book} that one can take this embedding to be disjoint from a fiber. 

Set $\overline{V_1} = W$ and $U = \sss$. 
Label the three nuclei in the original $E(2)$ as $N^{(0,a)}$ for $a = 1, 2, 3$, and label the nuclei in the transformed copy of $E(2)$ by  $N^{(1,a)}$ for $a = 1, 2, 3$. Denote the section classes by $\sigma^{(i,a)}$ and the fiber classes by $f^{(i,a)}$. After the log transforms to obtain $E(2)_{\infty;\infty;2p+1}$ we will have $N^{(1,3)}_{2p+1}$. Figure~\ref{N2_2} displays the result of a $(2p+1)$-log transform on  a nucleus. Let $f^{(3)}$ denote the $0$-framed handle and $s^{(3)}$ denote the $(-8p^2-6p-2)$-framed handle
in the displayed figure of $N^{(1,3)}_{2p+1}$.
\begin{figure}[ht]
\labellist
\small\hair 2pt
\pinlabel {$-2$} [ ] at 23 87
\pinlabel{$2p\!+\!1$} [ ] at 82 27
\pinlabel {$-8p^2-6p-2$} [ ] at 164 27
\pinlabel{$0$} [ ] at 218 50
\endlabellist
\centering
\includegraphics[scale = 1]{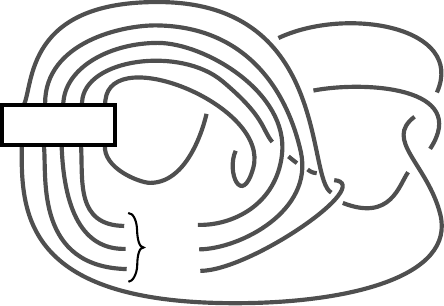}
  \caption{The Gompf nucleus after log transform $N^{(1,3)}_{2p+1}$}\label{N2_2}.
\end{figure}

By the work of Freedman, there is a homeomorphism 
rel boundary
$\zeta\colon  N^{(1,3)}_{2p+1}  \to  N^{(1,3)}$ that matches the {multiple fiber class $f^{(3)}$ in the transformed nucleus} with the fiber class in the nucleus on the other side. Furthermore, 
the class $s^{(3)}$ is sent to $\sigma^{(1,3)} - (4p^2+3p)f^{(1,3)}$.
Notice that after a log transform, the generic class is no longer primitive. Indeed the multiple fiber is primitive, and the generic fiber is a multiple of the multiple fiber. In this case the generic fiber is in the homology class of $(2p+1)f^{(3)}$ and $f^{(3)}$ is primitive. See \cite[Remark 3.3.5]{gompf-stipsicz:book}

In this case we have $\psi_p\colon  E(2)_{\infty;\infty;2p+1} \to E(2)$ extending $\zeta$ by the identity. We also have a diffeomorphism $\varphi_p\colon E(2)_{\infty;\infty;2p+1}\cs_T\widehat{V} \to E(2)\cs_T\widehat{V}$ as in the introduction. We then have the embeddings $j_p\colon Y\to  E(2)\cs_T\widehat{V}$ with $j_p = \varphi_p(\psi_p\cs_T1_{\widehat{V}})^{-1}j$, and the submanifolds $j_p(Y)$. We also have a model embedding $k\colon Y\to V$ that we can use to form submanifold sums to distinguish the submanifolds in $X$.

If we now consider the submanifolds $j_p(Y)$  we can still construct the submanifold sums, but the answer depends on the choice of marking, so we compare the manifolds obtained as submanifold sum using the markings $\{j_p,j_p\sigma,j_p\tau,j_p\tau\sigma\}$ for various $p$. The preferred components of the submanifold sums for the markings associated to $j_p(Y)$ are:
\[
\{E(2)_{\infty;\infty;2p+1} \cs_T V, E(2)_{\infty;\infty;2p+1} \cs_T V, 7\cptwo\cs 40\cptwobar, 7\cptwo\cs 40\cptwobar \}.
\]
In all cases the discarded components are $4\cptwo\cs 21\cptwobar$.

Let $\cc$ be the element of $\Chat_{E(2)\cs_TV}$ that restricts to the Poincar\'e duals of $\sigma^{(0,1)}$ on the $E(2)$ summands (recall that $V = E(2)\cs\cptwobar$). Let $\beta$ be the homology orientation induced by the submanifold sum. Applying the degree zero Donaldson invariant $q_{-,\cc,\beta}$ to this set of manifolds gives $\{2p+1,2p+1,0,0\}$. The dependence of the multiset of Donaldson invariants on $p$ demonstrates that the submanifolds for different values of $p$ are not smoothly isotopic. As remarked above, since the number of homotopy markings is finite in this case, we could alternatively sum the invariants over all homotopy markings. 
 
The general construction described in section~\ref{fam-pr} now gives spherical families $J^{k+1}_p$ of embeddings of $Y$. These in turn generate the families of submanifolds  given by $J^{k+1}_p(Y)$. As in the case of $\pi_0$ the multiset of submanifold sums taken over all markings will have four elements. Applying the degree zero family Donaldson invariants $\mathbb{D}_{\cc_q,\beta}$ will result in the multiset of values $\{\gamma_p\delta_{pq}, \gamma_p\delta_{pq}, 0,0\}$ for exactly the same reason. The first two elements arise from $J^{k+1}_p(Y)$ and $J^{k+1}_p\sigma(Y)$. The last two zero values arise from $J^{k+1}_p\tau(Y)$ and $J^{k+1}_p\tau\sigma(Y)$. The sum over all homotopy markings then gives non-trivial homomorphisms to $\Z$; using different choices for $\cc_q$ gives a surjection from the homology group of the space of submanifolds to free abelian groups of arbitrarily high (finite) rank.
\end{example}
    The easiest case to analyze is when the diffeomorphism group of $Y$ is contractible. This occurs when $Y$ is hyperbolic with no symmetries (i.e. is \emph{asymmetric}.) We start with a construction of such a $Y$ as a modification of the Akbulut cork. The construction works in much greater generality, but we will content ourselves with a particular case.  Recall that cobordism $P$ from a manifold $M$ to a manifold $N$ is invertible if there is a cobordism $Q$ from $N$ to $M$ with $P \cup_N Q \cong M \times I$.  There is a similar notion of invertible concordance for knots, links, and tangles, where in the latter setting one requires a fixed product structure along the boundary. The relevance of invertible cobordisms to embedding problems is the following obvious lemma.
\begin{lemma}
    A co-oriented embedding $j: N^{m-1} \hookrightarrow X^m$ and an invertible cobordism from $M$ to $N$ gives rise to a co-oriented embedding $M  \hookrightarrow X$.
\end{lemma}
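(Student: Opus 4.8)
The plan is to realize $M$ as a co-oriented level hypersurface sitting inside a collar of $j(N)$, and then to include that collar into $X$; the only real input is that invertibility lets one fit the cobordism into a product collar. First I would use the co-orientation of $j$ to fix a tubular-neighborhood embedding $c\colon N\times[0,1]\hookrightarrow X$ with $c|_{N\times\{0\}}=j$ and with the positive $[0,1]$-direction pointing along the chosen normal; this is just the standard trivialization of the (necessarily trivial) normal line bundle of a co-oriented two-sided hypersurface.

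The heart of the matter is to convert the algebraic hypothesis into a geometric sub-collar. The identification $P\cup_N Q\cong M\times I$ exhibits the cobordism as a relative collar inside a product: it provides a codimension-zero embedding of the cobordism into a cylinder so that one end is the full boundary of the cylinder and the other end is a co-oriented separating hypersurface in the interior, with the complementary region a copy of the inverse cobordism. Applying this with the roles arranged so that the $N$-end is identified with $N\times\{0\}$, one obtains a codimension-zero embedding $e\colon P\hookrightarrow N\times[0,1]$ which is the identity near $N\times\{0\}$ and for which $e(M)$ is an embedded, co-oriented, separating level hypersurface in the interior of $N\times[0,1]$. Composing, the map $c\circ e|_{M}\colon M\hookrightarrow X$ is the desired embedding, and its co-orientation is the image under $c\circ e$ of the positive $[0,1]$-direction. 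Note that the ambient manifold $X$ is never altered: we have merely placed $M$ inside an already-present collar of $j(N)$.

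The step I expect to be the genuine obstacle is precisely this extraction of the sub-collar, i.e.\ turning $P\cup_N Q\cong M\times I$ into the statement that $M$ occurs as a level set in $N\times[0,1]$ cobounding $P$ with $N\times\{0\}$. To make it precise I would observe that the product structure on $P\cup_N Q\cong M\times I$ restricts to a collaring of the interface $N$, whose $P$-side half is a product collar $N\times[0,\varepsilon]$; this lets one isotope $e$ to agree with the trivial product near the $N$-end and then glue it to the standard product on the remaining $[\varepsilon,1]$-direction, so that $e$ is defined on all of $N\times[0,1]$ as required. It is exactly here that invertibility (rather than an arbitrary cobordism) is used: the inverse $Q$ is what fills the complementary region and guarantees that the cobordism fits inside a product of the correct end. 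Once $M$ is placed as a level set, the remaining points are bookkeeping: the co-orientation passes through $c\circ e$ verbatim, and since $c\circ e|_{M}$ is isotopic to $j$ across the collar, $M$ separates $X$ whenever $N$ does. The same argument applies componentwise when $M$, $N$, or the cobordism is disconnected.
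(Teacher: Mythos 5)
The paper offers no proof of this lemma (it is introduced as ``the following obvious lemma''), so there is no argument of the authors to compare against; your overall strategy --- realize $M$ as a separating hypersurface inside a one-sided collar $N\times[0,1]$ of $j(N)$ and compose with the collar embedding $c$ --- is certainly the intended one. But the step you yourself identify as the heart of the matter does not work as written. The identification $P\cup_N Q\cong M\times I$ places the two copies of $M$ (namely $\partial P\setminus N$ and $\partial Q\setminus N$) at the two ends $M\times\{0\}$ and $M\times\{1\}$ of the cylinder, and places $N$ --- the gluing locus --- in the \emph{interior} as the separating hypersurface. So what the hypothesis literally provides is a codimension-zero embedding $P\hookrightarrow M\times I$ carrying the $M$-end onto $M\times\{0\}$ and $N$ into the interior: that is, an embedding of $N$ into a collar of $M$, which proves the implication ``$M$ embeds $\Rightarrow N$ embeds.'' Your phrase ``applying this with the roles arranged so that the $N$-end is identified with $N\times\{0\}$'' is exactly where the asymmetry is swept away: invertibility read from the $M$-end ($P\cup_N Q\cong M\times I$) does not yield a decomposition $N\times I\cong Q'\cup_M P'$ read from the $N$-end (just as $ab=1$ in a monoid does not imply $ba=1$), and it is the latter that your codimension-zero embedding $e\colon P\hookrightarrow N\times[0,1]$ requires.

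To be fair, the mismatch is inherited from the paper itself: the lemma as printed says ``invertible cobordism from $M$ to $N$,'' while the definition of invertibility and the way the lemma is used (the subsequent lemma produces invertible cobordisms \emph{from} the embedded manifold $Y$ \emph{to} the asymmetric manifolds $Y_n$ and concludes that the $Y_n$ embed) show that the intended hypothesis is an invertible cobordism from $N$ to $M$, i.e.\ a decomposition $N\times I\cong P\cup_M Q$ with $M$ as the interface. With that corrected hypothesis your argument goes through and in fact simplifies: $M$ is already a subset of $N\times I$, so one composes the inclusion with $c$, and the co-orientation of $M$ is the one pointing from the $P$-side toward the $Q$-side. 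Two smaller quibbles, neither affecting the conclusion: the interface $M$ is a separating hypersurface in $N\times I$ but not a \emph{level} hypersurface of the projection to $I$ (it cannot be, since $M$ and $N$ are in general not diffeomorphic), and $c\circ e|_{M}$ cannot be ``isotopic to $j$'' for the same reason --- what is true is that $j$ and the new embedding cobound a copy of $P$ inside the collar.
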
    

\begin{example}
This example exhibits a method for converting a $3$-manifold $N$ into one with a trivial symmetry group such that there is an invertible cobordism from $N$ to the new manifold. The idea is to insert a tangle with trivial symmetry group into a surgery diagram for $N$. The method seems to work in some generality and can easily be applied to particular examples.
\begin{lemma}\label{L:pkta}
   Let $Y$ be the Akbulut-Mazur cork, given as $(+1)$ surgery on the pretzel knot $P(-3,3,-3)$ as drawn in Figure~\ref{AM}.
   Then there is an invertible homology cobordism from $Y$ to an asymmetric hyperbolic homology sphere. More precisely, there are infinitely many distinct asymmetric hyperbolic homology spheres $Y_n$, and invertible cobordisms from $Y$ to $Y_n$.
\end{lemma}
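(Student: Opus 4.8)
The plan is to reduce the statement to the construction of an \emph{invertible knot concordance} and then pass to Dehn surgery. Recall that $Y = S^3_{+1}(P(-3,3,-3))$. I would produce knots $K_n$ together with invertible concordances $C_n$ from $P(-3,3,-3)$ to $K_n$; that is, concordance annuli $C_n \subset S^3 \times I$ for which there are concordances $C_n'$ from $K_n$ back to $P(-3,3,-3)$ with $C_n \cup_{K_n} C_n'$ isotopic rel ends to the product annulus. Performing surgery along $C_n$ in $S^3 \times I$ — removing a tubular neighborhood of the annulus and regluing with the framing that restricts to $+1$ on each boundary knot — produces a $4$-dimensional cobordism $W_n$ from $Y$ to $Y_n := S^3_{+1}(K_n)$, since the surgery restricts on each end to ordinary $(+1)$-surgery. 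Because surgery is compatible with the stacking of concordances, surgery along $C_n \cup C_n'$ is surgery along the product annulus, which is $Y \times I$; hence $W_n \cup_{Y_n} W_n' \cong Y \times I$ and $W_n$ is invertible. Finally, concordant knots have homology-cobordant surgeries and every $(+1)$-surgery on a knot in $S^3$ is an integer homology sphere, so each $Y_n$ is a homology sphere and each $W_n$ is an invertible \emph{homology} cobordism. This reduces the lemma to producing invertible concordances to knots $K_n$ whose $(+1)$-surgeries are hyperbolic and asymmetric.

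To build the $C_n$ I would work in the surgery diagram of Figure~\ref{AM}, isolating a ball $B$ that meets the knot in a trivial two-string tangle $T_0$ and replacing $T_0$ by a tangle $\tau_n$ that is \emph{doubly slice relative to $T_0$}: realized as the equatorial cross-section of a standardly embedded pair of disks in $B \times I$ whose restriction to $\partial B \times I$ is the product $\partial T_0 \times I$. This is the local, tangle-theoretic analogue of the fact that a knot is doubly slice exactly when it admits an invertible concordance from the unknot, with the invertible concordance given by the two halves of the slicing disk-pair. Such a tangle concordance is supported in $B \times I$, so it glues into a global invertible concordance from $P(-3,3,-3)$ to $K_n$. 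I would take $\{\tau_n\}$ to be an infinite family of pairwise distinct, rigid, asymmetric tangles, each doubly slice relative to $T_0$ and each inserted \emph{essentially} (so that $\partial B$ is not an essential Conway sphere in the exterior of $K_n$), so that each $K_n$ is individually invertibly concordant to $P(-3,3,-3)$ while $\tau_n$ carries no symmetry compatible with the rest of the diagram.

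It then remains to verify the three geometric properties of $Y_n$. For hyperbolicity, the essential doubly-slice insertion makes the exterior of $K_n$ atoroidal and anannular, hence hyperbolic by geometrization, and Thurston's hyperbolic Dehn surgery theorem guarantees that the $(+1)$-filling is hyperbolic for all but finitely many $n$. For asymmetry, the rigidity of $\tau_n$ forces $\text{Isom}(Y_n)$ to be trivial: any isometry preserves the canonical decomposition and hence the inserted piece, on which only the identity is admissible. This is computed with SnapPy and made rigorous by the method of the second author and Hoffman that reduces it to a verifiable symmetry computation on an associated link. For infinitude, I would arrange the $\tau_n$ to have strictly increasing complexity so that the hyperbolic volumes of the $Y_n$ are eventually distinct (most cleanly, by viewing the $Y_n$ as a family of fillings whose volumes converge monotonically to a cusped limit); discarding the finitely many exceptional $n$ leaves infinitely many pairwise non-homeomorphic asymmetric hyperbolic homology spheres, each with an invertible homology cobordism $W_n$ from $Y$. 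Note that, by Mostow rigidity together with the hyperbolic case of the generalized Smale conjecture, $\diff(Y_n) \simeq \text{Isom}(Y_n)$ is then contractible, which is the feature used in the following example.

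The main obstacle is controlling the competing requirements simultaneously: $\tau_n$ must be complicated and asymmetric enough to make $Y_n$ hyperbolic and to kill every isometry, yet structured enough that its concordance to $T_0$ is genuinely \emph{invertible} rather than merely a concordance — and, crucially, the insertion must be essential (to keep hyperbolicity) rather than a local connect sum (which would create an essential torus after surgery and destroy hyperbolicity). Establishing invertibility amounts to exhibiting the doubly-slice disk-pair for $\tau_n$ explicitly and checking that its two halves cancel rel $\partial B$; certifying asymmetry rigorously, rather than merely computationally, is the other delicate point. These two checks, carried out for an explicit infinite family, are where the real work lies.
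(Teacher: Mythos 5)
Your overall strategy coincides with the paper's: obtain $Y_n$ as surgery on a knot $K_n$ that differs from $P(-3,3,-3)$ by insertion of a doubly-slice tangle, so that the invertible concordance, surgered along its trace, yields the invertible homology cobordism; then use a rigorous SnapPy computation together with Thurston's Dehn surgery theorem for hyperbolicity, asymmetry, and distinctness. All of your reduction steps (invertible concordance gives invertible homology cobordism of the $(+1)$-surgeries, via surgery on the annulus and compatibility with stacking) are correct and match the paper.

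The genuine gap is that you defer the one step that carries the content of the lemma: you never produce a doubly-slice tangle, nor a certificate that its concordance to the trivial tangle is invertible, and you acknowledge that this is ``where the real work lies.'' Without an explicit tangle and an explicit disk pair, the argument is a plan rather than a proof. The paper fills this in concretely (Figure~\ref{F:PDR}, in the spirit of Bleiler's ``KT grabber'' and \cite[Theorem 2.6]{ruberman:seifert}): the tangle $T$ has two components, each bounding a genus-one relative Seifert surface, the two surfaces are disjoint, and each Seifert form has a hyperbolic basis $\{\alpha_i,\beta_i\}$; since $\{\alpha_1,\alpha_2\}$ and $\{\beta_1,\beta_2\}$ are trivial links, surgering the $\alpha$'s in $B^3\times[0,1/2]$ and the $\beta$'s in $B^3\times[1/2,1]$ turns both surfaces into disks on each side, and the union of the two surgery traces is the required pair of disks meeting $S\times I$ in product arcs --- this is the invertibility certificate you would need to supply. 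The paper also obtains the infinite family more economically than your proposal of infinitely many distinct rigid tangles $\tau_n$: a single tangle together with an augmentation circle $A$ carrying $-1/n$ surgery generates all the $K_n$ by twisting, so the $Y_n$ are Dehn fillings of one cusped hyperbolic manifold, and one rigorous (interval-arithmetic) SnapPy computation plus the Hodgson--Weeks argument delivers hyperbolicity, asymmetry, and pairwise distinctness for all large $n$ at once --- exactly the ``fillings converging to a cusped limit'' picture you gesture at, but with the invertible concordance working uniformly in $n$ because the twists lie inside the tangle being replaced and come undone when it is trivialized.
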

\begin{proof}
The manifold $Y_n$ is given by the following surgery diagram on the two component link $L=(K,A)$.  A calculation with SnapPy~\cite{SnapPy} shows that $L$ is hyperbolic with trivial symmetry group and moreover the manifold obtained by Dehn surgery on $K$ with coefficient $-1$ has the same properties.  The calculations are done via interval arithmetic and are therefore rigorous.  A standard argument~\cite{hodgson-weeks:symmetries} using Thurston's Dehn surgery theorem shows that for sufficiently large $n$, the manifolds $Y_n$ are asymmetric, hyperbolic, and distinct.  
\begin{figure}[ht]
\labellist
\pinlabel {$P(-3,3,-3)$} [ ] at 30 545
\pinlabel {$T$} [ ] at 270 85
\pinlabel{$-1/n$} [ ] at 62 33
\pinlabel{$-1$} [ ] at 15 55
\pinlabel {$A$} [ ] at 100 60
\pinlabel {$K$} [ ] at 5 30
\pinlabel {$\Purple{S}$} [ ] at 112 150
\pinlabel {$\Red{\alpha_1}$} [ ] at 155 40
\pinlabel {$\Blue{\beta_1}$} [ ] at 245 33
\pinlabel {$\Red{\alpha_2}$} [ ] at 263 185
\pinlabel {$\Blue{\beta_2}$} [ ] at 160 185
\endlabellist
\centering
\includegraphics[scale=1]{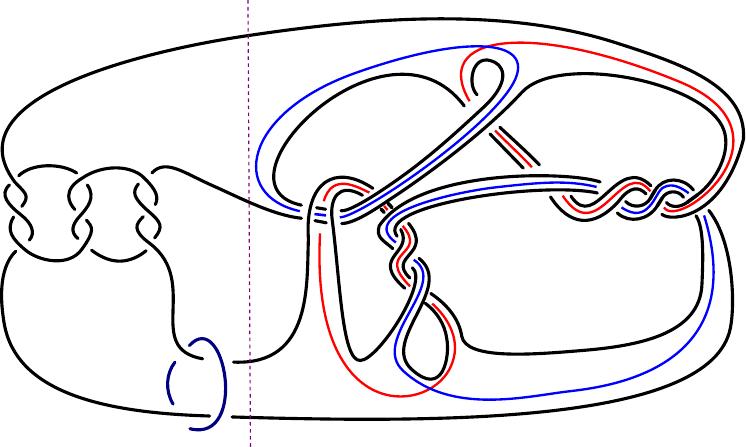}
  \caption{The $P(-3,3,-3)$ knot with DS tangle $T$ and augmentation $A$}\label{F:PDR}.
\end{figure}

It remains to construct the invertible cobordism; the construction is similar to the use of the `KT grabber' in~\cite{bleiler:prime}. Note first that doing $-1/n$ surgery on $A$ turns $K$ into a knot $K_n$ that has $n$ full twists in the region where it passes through a disk bounded by $K$. We will show that for all $n$, the pretzel knot $P(-3,3,-3)$ is invertibly concordant to $K_n$.  Note that the knot $K_n$ splits along the Conway sphere indicated by the vertical dotted line into a sum of two tangles. On the left is a tangle whose closure is the pretzel knot $P(-3,3,-3)$; let us call the portion on the right the tangle $T$. If one replaces $T$ by a trivial tangle, the resulting knot is the pretzel knot, because the extra twists come undone. 

There is an invertible cobordism from the trivial tangle to $T$, constructed as in the proof of~\cite[Theorem 2.6]{ruberman:seifert}. The main points in verifying the invertibility can be seen in Figure~\ref{F:PDR}, which is a variation of~\cite[Figure 1]{ruberman:seifert}. (Additional twists have been added to the bands to break the symmetries of the older picture.) The tangle $T$ has two components, each of which bounds a genus $1$ relative Seifert surface (with part of the boundary along $T$ and the other part along the sphere $S$) and that those surfaces are disjoint.  Observe that the Seifert form on homology of the lower surface has a hyperbolic basis $\{\alpha_1, \beta_1\}$, with a similar basis $\{\alpha_2, \beta_2\}$ for the upper surface.

View $T$ as lying in $B^3 \times \{1/2\} \subset B^3 \times I$. If one does surgery on either $\{\alpha_1, \alpha_2\}$ or  $\{\beta_1, \beta_2\}$, both of those surfaces become disks. Since $\{\alpha_1, \alpha_2\}$ is a trivial link one can add $2$-handles in $B^3 \times [0,1/2]$ to do the surgery, and similarly add $2$-handles in $B^3 \times [1/2,1]$ to do surgery along $\{\beta_1, \beta_2\}$.  The union of the traces of the two surgeries yields two copies of $D^2 \times I$ meeting $S \times I$ in a pair of arcs $\times I$.  

Gluing this cobordism to the product cobordism over the left side of the picture gives an invertible cobordism between the pretzel knot and $K_n$.
\end{proof}
\end{example}

\begin{example}
    Recall that the Akbulut-Mazur cork in $E(2)\cs\cptwobar$ is disjoint from the first Gompf nucleus. As $-\partial N(2) = \Sigma(2,3,11)$, we see that the Brieskorn sphere $Y = \Sigma(2,3,11)$ embeds into $E(2)\cs\cptwobar$ separating the torus used for fiber sums from the Akbulut-Mazur cork. By the same argument as in the application of Theorem~\ref{T:B} in Example~\ref{E:selman}, this embedding of $Y$ gives rise to $r$ distinct isotopy classes of embeddings $j_p$ distinguished Donaldson invariants $\mathbb{D}_{\cc_q,\beta}$. Likewise, there are rank $r$ large summands in the homotopy and homology groups of $\emb(Y,Z^k_r)$.  
   
Now consider the extension of these results to spaces of submanifolds; the key issue again is to understand the diffeomorphism group of $Y$. It is known~\cite{boileau-otal:small} that the mapping class group of $Y$ is $\Z_2$.  Moreover, the generator, say $\tau$ is represented by complex conjugation in a standard description of $Y$ as a link of an isolated singularity.  By \cite{MS} each component of the group $\diff(\Sigma(2,3,11))$ has the homotopy type of a circle.  

We first claim that the submanifolds $j_p(Y)$ are not isotopic. To show this, we note that Gompf showed~\cite[Lemma 3.7]{gompf:nuclei} that $\tau$ extends to a self-diffeomorphism of $N(2)$. It follows that $j_p$ and $j_p\tau$ have the same Donaldson invariants, so the multisets of invariants distinguish the submanifolds $j_p(Y)$. With regard to the higher-dimensional families, the long exact sequence
\[
\cdots \to \pi_n(\diff(Y)) \to \pi_n(\emb(Y,Z_{k})) \to \pi_n(\sub(Y,Z_{k})) \to \pi_{n-1}(\diff(Y)) \to \cdots
\]
shows that the families $J_p^{j+1} \in \pi_{j+1}(\emb(Y,Z^k_r))$ are independent.
\end{example}

\def\cprime{$'$}
\providecommand{\bysame}{\leavevmode\hbox to3em{\hrulefill}\thinspace}

\end{document}